\newcommand{\bP}{\ensuremath{\mathbb{P}}}
\newcommand{\bR}{\ensuremath{\mathbb{R}}}
\newcommand{\bZ}{\ensuremath{\mathbb{Z}}}
\newcommand{\cP}{\ensuremath{\mathcal{P}}}
\newcommand{\cT}{\ensuremath{\mathcal{T}}}
\newcommand{\norm}[1]{\left\Vert \, #1 \, \right\Vert}
\newcommand{\ddx}[1][1]{\ifnum#1=1 \frac{d}{dx} \else \frac{d^{#1}}{dx^{#1}} \fi}
\newcommand{\ddy}[1][1]{\ifnum#1=1 \frac{d}{dy} \else \frac{d^{#1}}{dy^{#1}} \fi}
\newcommand{\ddt}[1][1]{\ifnum#1=1 \frac{d}{dt} \else \frac{d^{#1}}{dt^{#1}} \fi}
\theoremstyle{plain}
\newtheorem{thm}{Theorem}[section]  
\newtheorem{prop}[thm]{Proposition}
\newtheorem{cor}[thm]{Corollary}
\newtheorem{lem}[thm]{Lemma}
\newtheorem{defn}{Definition}[section]
\theoremstyle{definition}
\newtheorem{q}{Question}
\theoremstyle{remark}
\newtheorem{rem}{Remark}[section]
\numberwithin{equation}{section}
\DeclareMathOperator{\integers}{\mathbb{Z}}
\DeclareMathOperator{\reals}{\mathbb{R}}
\DeclareMathOperator{\nat}{\mathbb{N}}
\newcommand{\set}[1]{\{ #1 \}}
\newcommand{\Latd}{\mathbb{Z}^d}
\newcommand{\hP}{\widehat{\bP}}
\title{Stochastic domination in space-time for the contact process}
\author{Jacob van den Berg  \footnote{CWI Amsterdam and VU University Amsterdam. 
 Email: j.van.den.Berg@cwi.nl }  \\
 Stein Andreas Bethuelsen 
\footnote{Technische Universit\"at M\"unchen. 
Email: stein.bethuelsen@tum.de}}
\begin{document}
\maketitle

\abstract{
 Liggett and Steif (2006) proved that, for the supercritical contact process on certain graphs, the upper invariant measure stochastically dominates an i.i.d.\ Bernoulli product measure. In particular, they proved this for $\Latd$ and (for infection rate sufficiently large) $d$-ary homogeneous trees $T_d$.

In this paper we prove some space-time versions of their results. We do this by combining their methods with specific properties of the contact process and general correlation inequalities.

One of our main results concerns the contact process on $T_d$ with $d\geq2$. We show that, for large infection rate, there exists a subset $\Delta$ of the vertices of $T_d$, containing a ``positive fraction" of all the vertices of $T_d$, such that the following holds: The contact process on $T_d$ observed on $\Delta$ stochastically dominates an independent spin-flip process. (This is known to be false for the contact process on graphs having subexponential growth.)

We further prove that the supercritical contact process on $\Latd$ observed on certain $d$-dimensional space-time slabs stochastically dominates an i.i.d.\ Bernoulli product measure, from which we conclude strong mixing properties  important in the study of certain random walks in random environment.
}
\bigskip

\emph{MSC2010.} Primary 60K35; Secondary 60E15, 60K37\\
\emph{Key words and phrases.} contact process, stochastic domination, Bernoulli product measure, downward FKG, 
cone-mixing. \bigskip

\section{Introduction and main results}

\subsection{Background and outline of this paper}
Let $G=(V,E)$ be a connected graph of bounded degree, and let $\lambda \in (0,\infty)$.  The contact process $(\eta_t)$ on $G$ with parameter $\lambda$  is the continuous-time  interacting particle system  on $\set{0,1}^{V}$ with local transition rates given by
\[ \eta \rightarrow \eta_x \text{ at rate } \left\{
	\begin{array}{ll}
		1, & \text{if }\eta(x) =1; \\
		\lambda \sum_{\{y \colon \{x,y\} \in E\}} \eta(y), & \text{if } \eta(x)=0,
	\end{array}
\right.\] for $x\in V$, and where $\eta_x$ is defined by $\eta_x(y) := \eta(y)$ for $y \neq x$, and $\eta_x(x) := 1 - \eta(x)$.
\medskip

Equivalently (see also Section \ref{sec contact}), one can imagine that for each site $x$ and each neighbour $y$ 
of $x$, there are 'clocks', denoted by $I(y,x)$ and $H(x)$,
which ring after independent, exponentially distributed, with mean $1/ \lambda$ and $1$ respectively, times (independent of the other clocks). At
each ring of the clock $I(y,x)$ the following happens: if $y$ has value $1$ 
and $x$ has value $0$, the value of $x$ immediately changes to
$1$. At each ring of the clock $H(x)$ the following happens: if $x$ has value $1$, the value of $x$ 
immediately changes to $0$.\medskip

The contact process was introduced by \citet{HarrisCP1974} in 1974 as a toy model for the spread of an infection in a population. With this interpretation in mind, $\lambda$ is often referred to as the ``infection" parameter and a site $x \in V$ is said to be \emph{infected} at time $t$ if $\eta_t(x)=1$, otherwise it is  said to be \emph{healthy}. A central question is whether the infections ``survive" with positive probability or eventually die out, i.e.\ all sites become healthy. As general references on contact processes we mention the books \cite{LiggettIPS1985} and \cite{LiggettSIS1999} by Liggett, from which we next recall some well
known properties. \medskip

The ``healthy" configuration where all sites are  equal to $0$, denoted by $\bar{0}$, 
is clearly  an absorbing state for the contact process. On the other hand, starting from  the full configuration where all sites are initially infected,
the contact process  evolves towards an invariant measure $\bar{\nu}_{\lambda}$. This state is often called the \emph{upper invariant measure}. 
Throughout this text we write \emph{upper stationary contact process} to denote the contact process whose law at an arbitrary time equals $\bar{\nu}_{\lambda}$.
\medskip

A well-known property of the contact process is that, if $G$ is countable infinite, it undergoes a phase transition: there is a critical threshold $\lambda_c \in [0,\infty)$, depending on $G$, such that, for all $\lambda <\lambda_c$, $\bar{\nu}_{\lambda}= \delta_{\bar{0}}$, and for all $\lambda>\lambda_c$, $\bar{\nu}_{\lambda}\neq \delta_{\bar{0}}$. Here, $\delta_{\bar{0}}$ denotes the measure that concentrates on $\bar{0}$.
In this paper we focus on the supercritical phase, i.e., the case $\lambda>\lambda_c$.
\medskip

Van den Berg, H\"aggstr\"om and Kahn \cite{BergHaggstromKahn2006} proved that the upper invariant measure satisfies the following property (called downward FKG in \cite{LiggettSteifSD2006}): for any finite $\Delta \subset V$,  the conditional measure $\bar{\nu}_{\lambda}(\cdot \mid \eta \equiv 0 \text{ on } \Delta)$ is positively associated. (See \citet{LiggettCASS2006} for a slightly stronger property).
\citet{LiggettSteifSD2006} used this result to show that, for the supercritical contact process  on $\Latd$, $d\geq1$, the upper invariant measure stochastically dominates a non-trivial Bernoulli product measure (see Corollary 4.1 therein). For the $d$-ary homogeneous tree $T_d$, where each site has $d+1$ neighbouring sites, they showed such a domination result for $\lambda>4$.
\medskip

 In this paper, we investigate if analogs of the mentioned domination results by Liggett
     and Steif hold for the contact process observed in space \emph{and time}.
  \medskip

 One of our main results concerns the upper stationary contact process on $T_d$ with $d\geq2$. We show that, for $\lambda > \lambda_c(\integers)$, there exists a subset $V$ of the vertices of $T_d$, containing a ``positive fraction" of all the vertices of $T_d$, such that the following holds: the contact process on $T_d$ observed on $V$ stochastically dominates a non-trivial independent spin-flip process (see Section \ref{sec sd and Bpm} for a definition of such processes). This is the content of Theorem \ref{thm spin-flip tree} below. Interestingly, this cannot happen for the upper stationary contact process on graphs having subexponential growth (such as $\bZ^d$), as shown in Proposition \ref{prop amenable}. \medskip

We furthermore prove that the upper stationary contact process on $\Latd$ with $d\geq1$ and $\lambda>\lambda_c$, observed on certain (discrete-time) $d$-dimensional space-time slabs, stochastically dominates a non-trivial Bernoulli product measure. This is the content of  Theorem \ref{thm spin-flip 2} below. Using this, we conclude in Theorem \ref{prop cone-mixing} that the contact process projected onto a thin space-time slab satisfies a strong mixing property known as cone-mixing. 
\medskip

The projection of the contact process onto a sub-lattice can be interpreted as a hidden Markov model and is motivated for instance by the study of  phase transition phenomena in nonlinear filtering (see \citet{RebeschiniHandelPTNF2015}) as well as the study of a random walk in a dynamic random environment (see \citet{BethuelsenVolleringRWDRE2016}). \medskip

A key observation for our arguments is that the results in \cite{BergHaggstromKahn2006} imply that the above mentioned downward FKG property extends to the contact process observed in space-time (see Lemma \ref{lem DFKG for CP}).
Our proofs are based on this observation, together with specific properties of the contact process and results and techniques from \cite{LiggettSteifSD2006}.

\subsubsection*{Outline of this paper}
In the next subsection we recall some basic definitions before we present our main results for the contact process in Subsection \ref{sec results}. In Subsection \ref{sec mixing} we discuss certain mixing properties which follow from our main results. 
 Section \ref{sec preliminaries} is devoted to some preliminary results. 
Proofs of our main results are provided in Section \ref{sec proofs}.
In Section \ref{sec questions} we present some open questions.

\subsection{Stochastic domination and Bernoulli product measures}\label{sec sd and Bpm}

Besides the contact process there are two key concepts in the presentation of our main results, namely stochastic domination and Bernoulli product measures. For the convenience of the reader, we briefly recall their definitions.\medskip

Given a countable set $V$, we are interested in probability measures on $\Omega := \{0,1\}^V$ and $D_{\Omega}[0,\infty)$, the set of c\`adl\`ag functions on $[0,\infty)$ taking values in $\Omega$.
For this, denote by $\mathcal{F}$ the product $\sigma$-algebra corresponding to $\Omega$ and let $\mathcal{M}_1(\Omega)$ be the set of probability measures on $(\Omega,\mathcal{F})$, and similarly, let $ \mathcal{M}_1(D_{\Omega}[0,\infty))$ be the set of measures on $D_{\Omega}[0,\infty)$. \medskip

For $\rho\in [0,1]$, we denote by $\mu_{\rho} \in \mathcal{M}_1(\Omega)$ the \emph{Bernoulli product measure} with density $\rho$. That is, for any finite $\Delta,\Lambda \subset V$ such that $\Delta \cap \Lambda = \emptyset$, the measure $\mu_{\rho}$ has cylinder probabilities given by
\begin{align}
\mu_{\rho} \left(\eta \in \Omega \colon \eta(x)=1 \: \forall x\in \Delta, \eta(x)=0 \: \forall x \in \Lambda \right) = \rho^{|\Delta|}(1-\rho)^{|\Lambda|}.
\end{align}
A related object is the following continuous-time process. Given $\alpha\geq0$,  the \emph{independent spin-flip process} $(\xi_t)$  with parameter $\alpha$ is the continuous-time Markov process on $\{0,1\}^{V}$  with local transition rates given by
\[ \eta \rightarrow \eta_x \text{ at rate } \left\{
	\begin{array}{ll}
		1, & \text{if }\eta(x) =1; \\
		\alpha, & \text{if } \eta(x)=0,
	\end{array}
\right.\]
for $x\in V$. Note that $(\xi_t)$ is ergodic with unique invariant measure $\mu_{\rho}$, where $\rho = \rho(\alpha) = \alpha/(\alpha+1)$.
\medskip

We next introduce the concept of stochastic domination.
For this, we associate to $\Omega$ the partial ordering such that $\xi \leq \eta$  if and only if $\xi(x) \leq \eta(x)$ for all $x \in V$. An event $B \in \mathcal{F}$ is said to be \emph{increasing} if $\xi\leq\eta$ implies $1_{B}(\xi)\leq 1_{B}(\eta)$. If $\xi\leq\eta$ implies $1_{B}(\xi)\geq 1_{B}(\eta)$ then $B$ is called \emph{decreasing}. For $\mu_1, \mu_2 \in \mathcal{M}_1(\Omega)$ we say that $\mu_1$ \emph{stochastically dominates} $\mu_2$ if $\mu_2(B) \leq \mu_1(B)$ for all increasing events $B \in \mathcal{F}$. Recall that, by Strassen's theorem (see \cite{LiggettIPS1985}, p.\ 72), $\mu_1$ \emph{stochastically dominates} $\mu_2$ is  equivalent to the existence of a coupling $(\eta,\xi)$ so that $\eta$ has distribution $\mu_2$ and $\xi$ has distribution $\mu_1$, and $\eta \leq \xi$ a.s. The definition of stochastic domination readily translates to measures on $D_{\Omega}[0,\infty)$, by extending the partial ordering for elements in $\Omega$ to $\Omega^{[0,\infty)}$, requiring that $\xi_t(x) \leq \eta_t(x)$  for all $(x,t) \in V \times [0,\infty)$.
\medskip

Another key concept used in the proof of the following theorems is that of \emph{downward FKG}, to which we return to in Section \ref{sec preliminaries}.

\subsection{Main results}\label{sec results}

As shown in \citet{LiggettSteifSD2006}, Corollary 4.1, the upper stationary contact process on $\Latd$, $d \geq 1$, with $\lambda>\lambda_c$ stochastically dominates a non-trivial Bernoulli product measure when observed at a fixed time $t$. That is, $\bar{\nu}_{\lambda}$ stochastically dominates $\mu_{\rho}$ for some $\rho\in(0,1)$. \medskip

On the other hand, as also shown in \cite{LiggettSteifSD2006}, stochastic domination of a non-trivial Bernoulli product measure does not hold in general for the entire space-time evolution. This can be extended to the contact process on graphs having subexponential growth.\medskip

Let $G=(V,E)$ be a connected graph of bounded degree and denote by $d \colon V\times V \rightarrow \integers_{\geq 0}$ the graph distance on G. 
Following  \cite[p.\ 181]{LyonsPeresTrees2016}, the graph $G$ is said to have \emph{subexponential growth} (of balls) if
\begin{align}\label{eq subexponential growth}
\liminf_{n \rightarrow \infty} \left| \{ x \in V \colon d(o,x) \leq n \} \right|^{1/n} =1, \quad \text{ for some }o \in V, 
\end{align}
 where $|\cdot|$ denotes the cardinality. Otherwise $G$ is said to have \emph{exponential growth}. 
Further, we say that $\Delta \subset V$ has \emph{positive density} if 
\begin{align}\label{eq density of set} 
 \liminf_{n \rightarrow \infty} \frac{ \left|\{ x \in \Delta \colon d(o,x) \leq n \}\right|}{\left|\{ y \in V \colon d(o,y) \leq n\} \right|}> 0, \quad \text{ for some }o \in V.\end{align}
\begin{rem}
Since we assume that $G$ is connected and has bounded degree we may in \eqref{eq subexponential growth} and \eqref{eq density of set} replace ``for some $o \in V$'' by ``for all $o \in V$''.
\end{rem}

\begin{prop}\label{prop amenable}
Let $(\eta_t)$ be the upper stationary contact process on a connected graph $G=(V,E)$ having subexponential growth and bounded degree with $\lambda>0$. Consider $\Delta \subset V$  having positive density. Then, for \textbf{no} parameter value except $\alpha=0$ can $(\eta_t)$ and $(\xi_t)$ be coupled so that, when initialised from $\bar{\nu}_{\lambda}$ and $\mu_{\rho(\alpha)}$ respectively, it holds that
\begin{align} \hP \left(\eta_t(x) \geq \xi_t(x) \text{ for all } (x,t) \in  \Delta \times[0,\infty)\right)=1.\end{align}
\end{prop}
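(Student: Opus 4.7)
The plan is to argue by contradiction. Suppose $\alpha > 0$ and a coupling $\hP$ as described exists, and put $\rho := \rho(\alpha) \in (0,1)$. The case $\bar\nu_\lambda = \delta_{\bar 0}$ is immediate: $\eta_t \equiv 0$ would force $\xi_t(x) \equiv 0$ for all $t \ge 0$ at any site $x \in \Delta$, whereas for $\alpha > 0$ the independent spin-flip visits $1$ a.s. So assume $\bar\nu_\lambda$ non-trivial. Fix an origin $o \in V$; let $B_n$ denote the ball of radius $n$ around $o$, $\partial B_n$ its external vertex boundary, and $D$ the maximum degree of $G$. For $T > 0$ set
\begin{equation}
\cE_n(T) := \{\eta_t(x) = 0 \text{ for all } (x,t) \in B_n \times [0,T]\}.
\end{equation}
I derive two competing bounds for $\hP(\cE_n(T))$ and play them off via subexponential growth.

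For the \emph{upper bound from domination}, note that on $\cE_n(T)$ the coupling forces $\xi_t(x) = 0$ for every $(x,t) \in (B_n \cap \Delta) \times [0,T]$. Since $(\xi_t)$ is stationary independent spin-flip with rates $(\alpha,1)$, sites decouple and each contributes a factor $(1-\rho)e^{-\alpha T}$; combined with the positive-density bound $|B_n \cap \Delta| \ge c |B_n|$ for all sufficiently large $n$ (some $c > 0$),
\begin{equation}
\hP(\cE_n(T)) \le \bigl((1-\rho)e^{-\alpha T}\bigr)^{c |B_n|}.
\end{equation}
For the \emph{lower bound}, the event $\cE_n(T)$ holds whenever (i) $\eta_0 \equiv 0$ on $B_{n+1}$ and (ii) no infection arrow from $V \setminus B_{n+1}$ into $B_{n+1}$ fires during $[0,T]$: by the graphical construction such an arrow is the only mechanism by which $B_{n+1}$ can start a $0 \to 1$ transition. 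The two events are independent; (ii) has probability at least $\exp(-\lambda D |\partial B_{n+1}| T)$, and (i), by positive association of $\bar\nu_\lambda$ (the $\Delta = \emptyset$ case of the downward FKG property recorded in Section \ref{sec preliminaries}), is bounded below by $(1-\theta^*)^{|B_{n+1}|}$, where $\theta^* := \sup_x \bar\nu_\lambda(\eta(x) = 1) < 1$ by a standard stochastic comparison with a two-state chain of rates $(\lambda D, 1)$.

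Combining the two bounds, taking logarithms, and dividing by $|B_n|$ yields
\begin{equation}
c \alpha T - \lambda D \, \frac{|\partial B_{n+1}|}{|B_n|} \, T \le c \log(1-\rho) + \frac{|B_{n+1}|}{|B_n|} \log \frac{1}{1 - \theta^*}.
\end{equation}
From $\liminf_n |B_n|^{1/n} = 1$ I extract a subsequence $n_k \to \infty$ with $|B_{n_k + 2}|/|B_{n_k}| \to 1$ (else some $\epsilon > 0$ would give $|B_{n+2}|/|B_n| \ge 1 + \epsilon$ for all large $n$, forcing $|B_n|^{1/n} \ge \sqrt{1+\epsilon}$ eventually). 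Along this subsequence $|B_{n_k + 1}|/|B_{n_k}| \to 1$ and, via $|\partial B_{m+1}| \le |B_{m+2}| - |B_{m+1}|$, also $|\partial B_{n_k + 1}|/|B_{n_k}| \to 0$. Passing to the limit leaves $c \alpha T \le \log\frac{1}{1 - \theta^*}$, a constant independent of $T$; letting $T \to \infty$ contradicts $\alpha > 0$. The crux is the boundary-versus-volume asymmetry: on the CP side, insulating $B_n$ costs only a boundary term $\exp(-O(|\partial B_{n+1}| T))$, while on the independent spin-flip side one pays the volume $\exp(-O(|B_n| T))$, and subexponential growth is precisely the regime in which boundary is negligible compared with volume.
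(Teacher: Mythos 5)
Your proof is correct and follows essentially the same route as the paper's: both compare an upper bound of order $e^{-c\alpha T|B_n|}$ on the all-healthy event in $B_n\times[0,T]$ (forced by the assumed domination and positive density) with a lower bound of order $\left[\prod_x\bar\nu_\lambda(\eta(x)=0)\right]e^{-\lambda D|\partial B_{n+1}|T}$ obtained from positive association plus insulating the ball from outside arrows in the graphical representation, and then use subexponential growth to make the boundary term negligible relative to the volume. The only cosmetic differences are that you take $n\to\infty$ along a subsequence before sending $T\to\infty$, whereas the paper fixes a single good $n$ and then takes $T$ large, and that you spell out $\sup_x\bar\nu_\lambda(\eta(x)=1)<1$ explicitly.
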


The proof of Proposition \ref{prop amenable} follows by an almost direct extension of the proof of \cite[Proposition 1.1]{LiggettSteifSD2006}, and is given in Section \ref{sec proofs 1}. In fact, the proof also works if Condition \eqref{eq subexponential growth} is replaced by the following condition,
\begin{align}\label{eq extend subexp}
\liminf_{n\rightarrow \infty} \frac{|\{ (x,y) \in E \colon d(o,x)=n =d(o,y)-1\}| }{ |\{ x \in V \colon d(o,x)\leq n\}|}=0,\: \text{ for some }o\in V,
\end{align}
which is easily seen to be weaker than \eqref{eq subexponential growth}. 
A natural question is whether Proposition \ref{prop amenable} also holds if \eqref{eq extend subexp} does not hold. 
Theorem \ref{thm spin-flip tree} below states that this is not the case  for homogeneous trees. See also Question $2$ in Section \ref{sec questions}.
\medskip

\begin{thm}\label{thm spin-flip tree}
Let $(\eta_t)$ be the upper stationary contact process on $T_d$, $d\geq2$, with $\lambda> \lambda_c(\integers)$. Let $V$ be the set of vertices of $T_d$.
Then there is a $\Delta \subset V$  having positive density  
together with an $\alpha= \alpha(\lambda)>0$  and a coupling $\hP$ of $(\eta_t)$ and $(\xi_t)$, 
  initialised from $\bar{\nu}_{\lambda}$ and $\mu_{\rho(\alpha)}$ respectively, such that
\begin{align}\label{eq projected onto tree}\hP \left(\eta_t(x) \geq \xi_t(x) \text{ for all } (x,t) \in \Delta \times[0,\infty) \right)=1.\end{align}
\end{thm}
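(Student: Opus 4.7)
The plan is to reduce the statement, via the space-time extension of the downward FKG (DFKG) property of the upper stationary contact process (Lemma~\ref{lem DFKG for CP}), to a uniform conditional probability lower bound, and then verify this bound by exploiting the branching structure of $T_d$ together with the assumption $\lambda > \lambda_c(\bZ)$. Concretely, by a continuous-time adaptation of the Liggett--Steif argument (cf.\ \cite[Theorem~1.2]{LiggettSteifSD2006}) combined with the space-time DFKG, it is enough to exhibit $\Delta \subset V$ of positive density and $\rho > 0$ such that
\[
\hP\bigl(\eta_t(x) = 1 \;\bigm|\; \eta \equiv 0 \text{ on } A\bigr) \;\geq\; \rho
\]
for every $(x,t) \in \Delta \times [0,\infty)$ and every finite $A \subset (\Delta \times [0,\infty)) \setminus \{(x,t)\}$, uniformly in $A$. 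The spin-flip rate $\alpha > 0$ of the coupling $\hP$ is then obtained from $\rho$ using the uniform lower bound $\lambda$ on the $0 \to 1$ transition rate of $\eta$ at any site with at least one infected neighbour.

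The next step is to construct $\Delta$. Since $d \geq 2$ gives every vertex of $T_d$ degree $d+1 \geq 3$, the edges of $T_d$ admit a partition into bi-infinite nearest-neighbour paths (at each vertex, designate two of the incident edges as ``path-edges'' and leave the remaining $d-1 \geq 1$ as ``branch-edges''). I would then two-colour the resulting paths into ``$\Delta$-paths'' and ``$\Delta^c$-paths'' in such a way that: (i)~each colour class carries a positive fraction of the vertices at every depth, and (ii)~every vertex $x$ on a $\Delta$-path is incident to at least one branch-edge to a vertex $y_x$ on a $\Delta^c$-path. An explicit (non-random) construction can be made to realise both properties simultaneously; the bi-infinite $\Delta^c$-path containing $y_x$, denoted $\pi_x$, then plays the role of a ``free'' escape route for~$x$.

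To verify the conditional lower bound, I would combine graph monotonicity with the Harris graphical representation. By graph monotonicity and $\lambda > \lambda_c(\bZ)$, the upper stationary contact process on $T_d$ restricted to any bi-infinite path $\pi$ stochastically dominates the upper stationary supercritical contact process on $\pi \cong \bZ$, whose single-time marginal is bounded below by some $\rho_0 > 0$ thanks to \cite[Corollary~4.1]{LiggettSteifSD2006}. In the Harris representation, the event $\{\eta_t(y_x) = 1\}$ contains the event that there is, for sufficiently large $T$, an infection path from time $-T$ to $(y_x, t)$ using \emph{only} the edges of $\pi_x$; this ``$\pi_x$-confined'' event depends only on the Poisson clocks on $\pi_x$-edges and on $\eta_{-T}$ restricted to $\pi_x$, and its probability is $\geq \rho_0$ uniformly in $T$. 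Combining this with the space-time DFKG property and the isolation of $\pi_x$ from $\Delta$ built into the construction of $\Delta$ then yields the required uniform lower bound $\rho$ on $\hP(\eta_t(x) = 1 \mid \eta \equiv 0 \text{ on } A)$.

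The principal obstacle is the last combination step. The conditioning $\{\eta \equiv 0 \text{ on } A\}$ is a condition on trajectories of the coupled Markov process and is not literally a restriction of the underlying Poisson clocks, so the ``$\pi_x$-confined'' event is not actually independent of~$A$. The DFKG inequality alone only supplies the one-sided comparison $\hP(B \mid A) \leq \hP(B)$ for increasing $B$, which goes in the \emph{wrong} direction for a lower bound; extracting the desired lower bound therefore requires combining DFKG with the explicit decoupling afforded by the geometry of $\Delta$ and the $\pi_x$'s (in particular, the fact that each $\pi_x$ is attached to $\Delta$ through the single edge $\{x,y_x\}$). Making this precise is the technical heart of the proof and is where the careful partition-and-colouring construction of $\Delta$ does the essential work.
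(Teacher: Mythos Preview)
Your reduction step is already problematic: the uniform lower bound
\[
\hP\bigl(\eta_t(x)=1 \bigm| \eta\equiv 0 \text{ on }A\bigr)\ge \rho
\]
over \emph{all} finite $A\subset(\Delta\times[0,\infty))\setminus\{(x,t)\}$ is simply false. Take $A=\{(x,t-\epsilon)\}$; by the Markov property the conditional probability is $O(\epsilon)$, since $x$ must receive an infection arrow in $(t-\epsilon,t)$. A correct continuous-time reduction (as in the paper's proof of Theorem~\ref{thm spin flip 3}) is phrased in terms of holding times, not single-time marginals: one bounds $\hP(\eta_s(x)=0\text{ for all }s\in[0,t]\mid B)\le e^{-\alpha t}$ for events $B$ in the past. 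Your ``combine $\rho$ with the $0\to1$ rate'' sentence does not bridge this gap.

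More importantly, even after fixing the reduction you run into exactly the obstacle you yourself flag: space-time DFKG gives $\hP(B\mid A)\le \hP(B)$ for increasing $B$ and $A$ of the form $\{\eta\equiv 0\text{ on }\cdots\}$, which is the wrong direction for a lower bound, and your escape paths $\pi_x$ are not genuinely decoupled from the conditioning (the event $\{\eta\equiv 0\text{ on }A\}$ is a condition on trajectories, not on Poisson clocks). You describe this as ``the technical heart of the proof'' but give no mechanism to overcome it; I do not see one along these lines.

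The paper's route avoids the conditioning problem entirely. It first proves, as a separate result (Theorem~\ref{thm spin flip 3}), that for the contact process on $\{0,1,2,\dots\}$ with $\lambda>\lambda_c(\bZ)$ the single-site process $(\eta_t(0))_{t\ge 0}$ dominates a spin-flip process with some rate $\alpha>0$; this is where DFKG and Liggett--Steif are used, but only for one site at a time. Then, in $T_d$, one attaches to each $x\in\Delta$ a one-sided ray $R_x\cong\{0,1,2,\dots\}$ rooted at $x$, chosen so that the rays $\{R_x:x\in\Delta\}$ are pairwise \emph{disjoint} (this is possible because $d\ge 2$, and yields $\Delta$ of density $(d-1)/d$). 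In the graphical representation, the processes obtained by using only arrows and crosses within $R_x$ are genuinely \emph{independent} across $x\in\Delta$, each dominates a rate-$\alpha$ spin-flip at its root by the half-line result, and each is dominated by the full contact process by monotonicity. Independence plus single-site domination immediately gives domination of the product spin-flip process on $\Delta$---no conditional lower bound against DFKG is ever needed.
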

Thus, \eqref{eq projected onto tree} in Theorem \ref{thm spin-flip tree} concerns  the contact process on $T_d$ \emph{projected onto} a subset of $V \times [0,\infty)$ (a terminology we often refer to later). Theorem \ref{thm spin-flip tree} says that the contact process projected onto $\Delta \times [0,\infty)$ stochastically dominates an independent spin-flip process.
\medskip

To prove Theorem \ref{thm spin-flip tree}, we first show that the contact process on $\{0,1,\dots \}$ observed at the vertex $0$ stochastically dominates an independent spin-flip process (in fact, we show a generalisation of this). Once this is obtained, Theorem \ref{thm spin-flip tree} follows by a monotonicity argument. From the precise argument, given in Section \ref{sec proof of spt}, it moreover follows that the set $\Delta$ in Theorem \ref{thm spin-flip tree} can be chosen such that the l.h.s.\ of \eqref{eq density of set} equals $\frac{d-1}d$.
\medskip

Denote by \begin{align}\label{eq results survival}
\tau^x:= \inf \{ t\geq 0 \colon \eta_t^x \equiv \bar{0} \}, \quad x \in V,
\end{align} 
the extinction time for the contact process $(\eta_t^x)$ started with only $x$ initially infected.

\begin{thm}\label{thm spin flip 3}
Let $(\eta_t)$ be the upper stationary contact process on a connected graph $G=(V,E)$ having bounded degree  with $\lambda>0$. Let $x \in V$ for which there exist $C,c>0$ such that, 
\begin{align}\label{eq survival2}
 &\bP ( \tau^x=\infty) >0;
\\&\label{eq survival}  \bP ( s<\tau^x<\infty) \leq Ce^{-cs}, \quad  \text{ for all } s\geq0.
\end{align}
Then there exist $\alpha= \alpha(\lambda)>0$ and a coupling $\hP$ of $(\eta_t)$ and $(\xi_t)$  initialised from $\bar{\nu}_{\lambda}$ and $\mu_{\rho(\alpha)}$ respectively,  such that 
\begin{align}\hP \left(\eta_t(x) \geq \xi_t(x) \text{ for all } t\in [0,\infty) \right)=1.\end{align}
\end{thm}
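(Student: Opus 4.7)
The plan is to verify a conditional lower bound on the probability that $(\eta_t(x))$ returns to $1$ within time $h$ given past zeros, then to invoke Strassen's theorem to assemble a pathwise coupling with $(\xi_t(x))$; the space-time downward FKG property of $\bar\nu_\lambda$ (the path-space extension of \cite{BergHaggstromKahn2006}) handles the conditioning direction, and the survival hypotheses supply the quantitative rate.

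First I exploit the graphical construction. Since the recovery clock $H(x)$ rings at rate $1$ independently of everything else, every on-interval of $(\eta_t(x))_{t\geq 0}$ ends at an independent $\mathrm{Exp}(1)$ waiting time. I therefore couple $(\xi_t(x))$ to share the same recovery clock, matching the down-rate exactly, and reduce to controlling the off-intervals of $(\eta_t(x))$. The core technical step is to show that there exists $\alpha = \alpha(\lambda) > 0$ such that for every finite $T \subset [0,\infty)$, every $s \notin T$, and every $h > 0$,
\[
\bP\!\bigl(\eta_r(x)=1 \text{ for some } r\in[s,s+h] \,\bigm|\, \eta_t(x)=0 \;\forall\, t\in T\bigr) \;\geq\; 1 - e^{-\alpha h}.
\]
The unconditional case ($T=\emptyset$) comes from the survival hypotheses: by duality, $\bar\nu_\lambda(\eta_r(x)=1)=\bP(\tau^x=\infty)>0$ using \eqref{eq survival2}, and \eqref{eq survival} provides the exponential tail on finite extinction times of the backward dual. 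A restart argument that launches a fresh backward dual from $x$ at each independent recovery event of $x$ inside $[s,s+h]$ then yields the exponential rate. For general $T$, the downward FKG property implies that conditioning on additional past zeros only decreases the probability of any increasing event in the future, so the unconditional estimate persists, provided the restart argument is arranged to use only backward duals emitted at times $\geq s$ (so that the event being lower-bounded is measurable with respect to the part of the graphical construction that the FKG monotonicity tilts in the desired direction).

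With the conditional lower bound in hand, Strassen's theorem applied at a countable dense set of times, combined with cadlag regularity, produces a pathwise coupling between $(\eta_t(x))$ and a $\{0,1\}$-valued Markov process whose on-intervals are $\mathrm{Exp}(1)$ and whose off-intervals are $\mathrm{Exp}(\alpha)$---which is precisely the independent spin-flip process $(\xi_t(x))$ at rate $\alpha$ initialised in $\mu_{\rho(\alpha)}$. The main obstacle is securing the rate $\alpha$ \emph{uniformly} in $T$: downward FKG gives the right monotonicity direction for the conditioning, but the quantitative exponential rate must be extracted from \eqref{eq survival} via the renewal structure of the recovery clock at $x$, and threading these two inputs together against arbitrarily many past zeros is the crux of the argument.
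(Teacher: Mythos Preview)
Your approach shares the paper's ingredients --- space-time downward FKG plus exponential control from the survival hypotheses --- but the central step has the monotonicity direction backwards. You write that ``conditioning on additional past zeros only decreases the probability of any increasing event in the future, so the unconditional estimate persists''; the first clause is correct, but it works \emph{against} you. You seek a \emph{lower} bound on the conditional probability of the increasing event $\{\eta_r(x)=1 \text{ for some } r\in[s,s+h]\}$; since conditioning on zeros can only make this probability smaller, the unconditional bound $\geq 1-e^{-\alpha h}$ does \emph{not} transfer. Equivalently, writing $A_{s,s+h}=\{\eta_r(x)=0\text{ for all }r\in[s,s+h)\}$, dFKG gives $\cP_\lambda(A_{s,s+h}\mid \text{past zeros}) \geq \cP_\lambda(A_{s,s+h})$, so an unconditional upper bound $\leq e^{-\alpha h}$ is not preserved under conditioning. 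Your parenthetical about arranging the restart to use only backward duals emitted at times $\geq s$ does not rescue this: the event $\{\eta_r(x)=1\}$ requires a backward path all the way to $-\infty$ and is not measurable with respect to the post-$s$ graphical construction alone, so it cannot be decoupled from the conditioning.

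This wrong-direction monotonicity is exactly the crux, and the paper handles it by a different mechanism. Setting $f(t,u)=\cP_\lambda(A_{0,t}\mid A_{-u,0})$, dFKG gives that $f(t,u)$ is \emph{non-decreasing} in $u$ (the ``bad'' direction), so the worst-case limit $f(t)=\lim_{u\to\infty}f(t,u)$ exists. The key step is that stationarity and the nested conditioning give the functional equation $f(t+s)=f(t)f(s)$, forcing $f(t)=e^{-ct}$ for some $c\geq 0$. One then shows $c\geq\alpha$ by contradiction, in the spirit of Lemma~\ref{lem DFKG domi1}: if $c<\alpha$, then for large $u$ the conditional all-zero probability $f(t,u)$ exceeds $e^{-\alpha' t}$ for some $\alpha'\in(c,\alpha)$, and iterating this over many consecutive blocks violates the unconditional bound $\cP_\lambda(A_{0,n})\leq e^{-\alpha n}$ coming from Lemma~\ref{lem spin-flip 1} (which is where the survival hypotheses \eqref{eq survival2}--\eqref{eq survival} enter). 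The final coupling step --- using the rate-$1$ recovery clock for on-intervals and the bound $f(t)\leq e^{-\alpha t}$ for off-intervals --- is then as you describe.
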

\medskip

Note that \eqref{eq survival2} and \eqref{eq survival} are known to hold for all vertices throughout the supercritical phase  for the contact process on $\Latd$, $d\geq1$ (see \cite[Theorem 1.2.30]{LiggettSIS1999}),  and on $\{0,1,\dots\}$ (see \cite{DurrettGriffeathCPhighDim1982}, p.\ 546 and \cite{DurretGriffeathCP1983}).\medskip

For the proof of Theorem \ref{thm spin flip 3} (in Section \ref{sec proofs 3}) we use that the contact process satisfies the downward FKG property in space-time (see Section \ref{sec preliminaries} for a proper definition). Combining this with large deviation estimates of the probability that there are no infections at the site $x$ in the time interval $[0,t]$ and a general theorem in \cite{LiggettSteifSD2006} (which we state in Lemma \ref{lem DFKG domi1}) yields the statement of Theorem \ref{thm spin flip 3}.   \medskip

It seems natural that Theorem \ref{thm spin flip 3} can be extended to the case where instead of observing the contact process at a single site, we  observe it on a finite subset $\Delta \subset V$.
Apart from some special cases, we are not able to show this in general. On the other hand, interestingly, we are able to extend Theorem \ref{thm spin flip 3} when restricting to observations at discrete times. For this, denote by 
\begin{align}\label{eq notation Z}
\integers_{T}:= \{0,\pm T, \pm 2T,\dots \}, \quad T\in(0,\infty).
\end{align}

\begin{thm}\label{thm spin flip 4}
Let $(\eta_t)$, $\lambda$ and $G$ be as in Theorem \ref{thm spin flip 3}. Let $\Delta\subset V$ be finite and let $x \in \Delta$ be such that \eqref{eq survival2} and \eqref{eq survival} hold.
Then, for each $T \in(0,\infty)$, there exist $\rho= \rho(\lambda,T,\Delta)>0$  such that $(\eta_t)$ projected onto $\Delta \times \integers_T$  stochastically dominates a Bernoulli product measure with parameter $\rho$.
\end{thm}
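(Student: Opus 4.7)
The plan is to reduce the problem to a Liggett--Steif-type criterion for downward FKG measures to dominate a Bernoulli product. By Lemma~\ref{lem DFKG for CP}, the space-time distribution of the upper stationary contact process $(\eta_t)$ is downward FKG, and the projection $\mu_{\Delta,T}$ onto $\{0,1\}^{\Delta\times\integers_T}$ inherits this property. Appealing to Lemma~\ref{lem DFKG domi1}, it then suffices to exhibit a constant $\rho=\rho(\lambda,T,\Delta)>0$ such that
\begin{equation}\label{eq sketch goal}
\bP\bigl(\eta_{s_0}(y_0)=1 \bigm| \eta_s(y)=0\ \text{for all}\ (y,s)\in F\bigr)\geq\rho
\end{equation}
for every $(y_0,s_0)\in\Delta\times\integers_T$ and every finite $F\subset(\Delta\times\integers_T)\setminus\{(y_0,s_0)\}$. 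Time-stationarity of $(\eta_t)$ lets us fix $s_0=0$ throughout.

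For the base case $y_0=x$, the bound \eqref{eq sketch goal} can be derived by adapting the argument behind Theorem~\ref{thm spin flip 3} to the discrete-time setting. Combining space-time downward FKG with the exponential tail \eqref{eq survival} on the extinction time at $x$ produces a positive lower bound $\bP(\eta_0(x)=1\mid\eta\equiv 0\ \text{on}\ F_x)\geq\rho_x$ for every finite $F_x\subset\{x\}\times(\integers_T\setminus\{0\})$, essentially the statement of Theorem~\ref{thm spin flip 3} restricted to $\integers_T$.

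For a general $y_0\in\Delta$, the plan is to propagate this $x$-estimate by a short-time coupling. Choose $\epsilon\in(0,T/2)$, so that $-\epsilon$ lies strictly between the discrete observation times $-T$ and $0$, and on the graphical representation introduce the event
\begin{align*}
A_\epsilon:=\{\eta_{-\epsilon}(x)=1\}\cap\bigl\{&\text{an infection path in}\ (-\epsilon,0]\ \text{joins}\ (x,-\epsilon)\ \text{to}\ (y_0,0),\\
&\text{and each}\ z\in\Delta\setminus\{y_0\}\ \text{experiences a healing in}\ (-\epsilon,0)\bigr\}.
\end{align*}
Since $\Delta$ is finite, the graphical probability of $A_\epsilon$ conditional on $\eta_{-\epsilon}(x)=1$ is bounded below by some $c_0=c_0(\lambda,T,\Delta)>0$, and $A_\epsilon\subset\{\eta_0(y_0)=1\}\cap\bigcap_{z\in\Delta\setminus\{y_0\}}\{\eta_0(z)=0\}$, so $A_\epsilon$ is consistent with any ``time-$0$'' condition in $F$. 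Combining this with the Markov property at time $-\epsilon$ and the previous $x$-estimate (time-shifted to $-\epsilon$ by stationarity) then yields \eqref{eq sketch goal} at $(y_0,0)$.

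The main obstacle is the treatment of the ``future'' part $F^+:=\{(y,s)\in F:s>0\}$. Since $A_\epsilon$ pins $\eta_0$ only on $\Delta$, the conditional distribution of $(\eta_0(y))_{y\in V\setminus\Delta}$ given $A_\epsilon$ need not coincide with its unconditional law, and a priori $\bP(F^+=0\mid A_\epsilon,\eta_{-\epsilon})$ could be significantly smaller than $\bP(F^+=0\mid\eta_{-\epsilon})$. To handle this we would apply space-time downward FKG once more: the configuration that $A_\epsilon$ forces on $\Delta$ at time $0$ contains at most one $1$ (at $y_0$) and zeros elsewhere, hence is ``small'' in the relevant partial order, so by DFKG the decreasing event $\{F^+=0\}$ can only become more likely under the additional conditioning. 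A careful Markov splitting of $F$ at the times $-\epsilon$ and $0$ should then close the argument.
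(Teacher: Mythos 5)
Your overall strategy --- space-time dFKG plus a Liggett--Steif-type single-site criterion plus a short-time spreading argument exploiting the finiteness of $\Delta$ --- is in the same spirit as the paper's, but two steps do not go through as written. First, your reduction demands the conditional lower bound for \emph{arbitrary} finite $F\subset\Delta\times\integers_T$, including points strictly in the future of $(y_0,0)$, and your proposed treatment of the future part $F^+$ fails: the event $A_\epsilon$ contains the increasing event $\{\eta_{-\epsilon}(x)=1\}$, and under positive association conditioning on an increasing event makes the decreasing event $\{\eta\equiv0\text{ on }F^+\}$ \emph{less} likely, not more; the remaining ingredients of $A_\epsilon$ (existence of an infection path, healing marks) are events of the graphical representation, not monotone events of the process, so dFKG says nothing about them. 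This obstacle is self-inflicted: in the sequential (Strassen-type) coupling one may reveal the sites of $\Delta\times\integers_T$ in time order, so only conditioning on the past and on part of the current time slice is ever needed. This is precisely why the paper works with the one-dimensional max-process $Y_i=\max\{\eta_{Ti}(y)\colon y\in\Delta\}$ and invokes Lemma~\ref{lem DFKG domi1}, whose condition~3 is one-sided, rather than a symmetric criterion over all finite $F$.

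Second, your ``base case'' only controls $\bP(\eta_0(x)=1\mid\eta\equiv0\text{ on }F_x)$ for $F_x\subset\{x\}\times\integers_T$, whereas your propagation step needs $\bP(\eta_{-\epsilon}(x)=1\mid\eta\equiv0\text{ on }F^-)\geq\rho_x$ with $F^-$ ranging over finite subsets of $\Delta\times\integers_T$ in the past, i.e.\ with zeros imposed at \emph{all} sites of $\Delta$; by dFKG, adding zero-conditioning can only decrease this probability, so the single-site estimate does not imply the multi-site one. The paper closes this by applying Lemma~\ref{lem spin-flip 1} together with Lemma~\ref{lem dfkg max} to the max-process, for which conditioning on $\{Y_{-j}=0\}$ \emph{is} conditioning on all of $\Delta$ being healthy. (A smaller issue: as defined, $A_\epsilon$ does not force $\eta_0(z)=0$ for $z\in\Delta\setminus\{y_0\}$, since $z$ may be re-infected after its healing mark, and the infection path to $y_0$ may itself pass through such a $z$; one must additionally forbid late incoming arrows, which is harmless but necessary for $A_\epsilon$ to be consistent with the time-$0$ conditioning.) With these repairs --- time-ordered revelation, a max-process base case, and a corrected $A_\epsilon$ --- your argument essentially collapses onto the paper's proof.
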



We end this subsection with a result, Theorem \ref{thm spin-flip 2} below, for the supercritical contact process on $\Latd$. As seen in Proposition \ref{prop amenable}, this process cannot stochastically dominate a non-trivial independent spin-flip process, not even when projected onto a subset $\Delta$ of positive density. This naturally leads to the question what happens for subsets $\Delta \subset \Latd$ for which the l.h.s.\ of \eqref{eq density of set} equals $0$.
Theorem \ref{thm spin-flip 2}  concerns one such case, namely, the contact process  projected onto certain (discrete-time) space-time slabs.\medskip

For $m\in \nat$, let
\begin{align}
\integers^d_{d-1}(m):= \left\{ (x_1,\dots, x_d) \in \Latd \colon x_d\in \{0,\dots,m-1\}  \right\},
\end{align}
be the $(d-1)$-dimensional sublattice of $\Latd$ of width $m$. When $m=1$ we simply write $\integers^d_{d-1}$.
\begin{thm}\label{thm spin-flip 2}
Let $(\eta_t)$ be the upper stationary contact process on $\mathbb{Z}^d$, $d\geq 1$, with $\lambda> \lambda_c$. Let  $T \in (0,\infty)$ and $m\in \nat$. Then there exists $\rho=\rho(\lambda,T,m) >0$  such that $(\eta_t)$ projected onto $\integers^d_{d-1}(m) \times \integers_T$  stochastically dominates a Bernoulli product measure with parameter $\rho$. 
\end{thm}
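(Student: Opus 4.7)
The plan is to apply Lemma~\ref{lem DFKG domi1} of Liggett and Steif, which reduces stochastic domination by a Bernoulli product measure to the combination of downward FKG and a uniform lower bound on one-point conditional probabilities. Since Lemma~\ref{lem DFKG for CP} establishes downward FKG for the space-time law (a property preserved under marginalisation onto $\mathbb{Z}^d_{d-1}(m)\times\mathbb{Z}_T$), the task reduces to exhibiting $\rho=\rho(\lambda,T,m)>0$ such that
\begin{equation}\label{eq plan LB}
\bar{\nu}_\lambda\bigl(\eta_{kT}(x)=1 \,\big|\, \eta_{k'T}(y)=0 \text{ for all } (y,k'T)\in S\bigr)\geq\rho
\end{equation}
for every finite $S\subset\mathbb{Z}^d_{d-1}(m)\times\mathbb{Z}_T$ and $(x,kT)\notin S$. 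By time-stationarity I would fix $k=0$.

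For \eqref{eq plan LB}, the plan is a graphical-representation surgery in the spirit of the proof of Corollary~4.1 in Liggett and Steif. Work with the extended graphical representation on $\mathbb{Z}^d\times\mathbb{R}$ realising $\bar{\nu}_\lambda$. Pick $\delta\in(0,T)$ and a neighbour $y\in\mathbb{Z}^d_{d-1}(m)$ of $x$, and introduce a local forcing event $F$, supported on the Poisson clocks inside the finite spacetime box $\{x,y\}\times(-\delta,0]$, that deterministically produces $\eta_0(x)=1$ (concretely: no $H(x)$ ring in $(-\delta,0]$ and an early $I(y,x)$ ring before any $H(y)$ ring or competing outgoing arrow from $y$) while being compatible with $\eta\equiv0$ on $S\cap(\mathbb{Z}^d_{d-1}(m)\times\{0\})$. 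Since the clocks defining $F$ are Poisson-independent of everything outside the box, the probability of $F$ conditional on $\eta_{-\delta}(y)=1$ and on the graphical representation outside the box is a fixed positive constant $c_1(\lambda,\delta)$.

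The technical heart -- and the main obstacle -- is then to lower-bound $\bar{\nu}_\lambda(\eta_{-\delta}(y)=1 \mid \eta\equiv0 \text{ on } S)$ uniformly in $S$. The difficulty is that the conditioning involves both past \emph{and} future space-time constraints, while downward FKG only provides an upper bound on increasing events under decreasing conditioning. I plan to handle this by combining Corollary~4.1 of Liggett-Steif (applied to the spatial marginal at time $-\delta$, which guarantees a positive Bernoulli density for $\eta_{-\delta}$ before incorporating the constraints at other time slices) with the space-time downward FKG of Lemma~\ref{lem DFKG for CP} and the exponential decay estimates \eqref{eq survival2}--\eqref{eq survival}, valid throughout the supercritical phase on $\mathbb{Z}^d$. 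The translation invariance of $\mathbb{Z}^d$ in the $d-1$ slab directions together with the finite slab width $m$ is what makes $\rho$ depend only on $\lambda,T,m$ and not on $|S|$, in contrast with the $\Delta$-dependent constant obtained in Theorem~\ref{thm spin flip 4}. Combining these ingredients yields \eqref{eq plan LB}, and Lemma~\ref{lem DFKG domi1} then concludes the proof.
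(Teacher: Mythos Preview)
Your proposal has a genuine gap at exactly the point you flag as the ``main obstacle''. The surgery step merely trades the problem of lower-bounding $\mathcal{P}_\lambda(\eta_0(x)=1\mid\eta\equiv0\text{ on }S)$ for that of lower-bounding $\mathcal{P}_\lambda(\eta_{-\delta}(y)=1\mid\eta\equiv0\text{ on }S)$, and you do not actually establish the latter. The ingredients you list do not combine in the way you suggest: Corollary~4.1 of \cite{LiggettSteifSD2006} controls conditioning at a \emph{single} time slice, downward FKG only tells you that extra zero-conditioning can hurt (not by how much), and the decay estimates \eqref{eq survival2}--\eqref{eq survival} concern survival from a single seed, not conditional marginals under conditioning on many space-time zeros. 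Nothing here rules out that conditioning on $\eta\equiv0$ on a large finite subset of $\mathbb{Z}^d_{d-1}(m)\times\mathbb{Z}_T$ pushes the probability of $\{\eta_{-\delta}(y)=1\}$ to zero as $|S|\to\infty$. (A secondary issue: Lemma~\ref{lem DFKG domi1} is the one-dimensional statement; for $d\geq2$ the index set is $d$-dimensional and you would need Lemma~\ref{lem DFKG domi}, which moreover is phrased for $\mathbb{Z}^d$ and not for a set with a finite factor $\{0,\dots,m-1\}$.)

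The paper avoids this difficulty altogether by a reduction you are missing. For $d=1$ the slab $\mathbb{Z}^1_0(m)$ is finite, so Theorem~\ref{thm spin flip 4} applies directly. For $d\geq2$, the paper invokes Bezuidenhout--Grimmett to choose a width $k$ so that the contact process restricted to the tube $S_k$ is still supercritical with the exponential survival bounds of Lemma~\ref{lem slabs}. It then partitions $\mathbb{Z}^d$ into translates of $S_k$, suppresses all infection arrows between tubes, and obtains a minorant process $(\zeta_t)\leq(\eta_t)$ in which distinct tubes evolve independently. Inside each tube, the projection onto $\mathbb{Z}^d_{d-1}(m)$ hits only \emph{finitely many} vertices, so Theorem~\ref{thm spin flip 4} gives product domination tube by tube; independence across tubes then yields global product domination for $(\zeta_t)$, and hence for $(\eta_t)$. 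The essential idea is thus to manufacture spatial independence so that the already-proved finite-$\Delta$ result can be applied, rather than to attack the uniform conditional bound head-on.
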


\subsection{Mixing properties}\label{sec mixing}

The purpose of this subsection is to show that the domination results we have obtained so far are useful in order to conclude mixing properties for the contact process, in particular when observed in a subspace.\medskip

We first note that, from the statement of Theorem \ref{thm spin-flip 2} with $m=1$, we obtain a stronger notion of domination, which we present next. For $t\in (0,\infty)$ and $T \in (0,\infty)$, let $\integers_{T}(t) := \{s \in \integers_{T} \colon s < tT \}$ and  denote by 
$\mathcal{P}_{\lambda}^{\text{slab}} \left( \cdot \right)$ the law of the projection of $(\eta_t)$ onto $\Latd_{d-1} \times \integers_{T}.$

\begin{cor}\label{cor spin-flip 2}
Let $(\eta_t)$ be the upper stationary contact process on $\mathbb{Z}^d$, $d\geq 1$, with $\lambda> \lambda_c$. Let  $T \in (0,\infty)$. Then, with $\rho=\rho(\lambda,T,1)$ as in Theorem \ref{thm spin-flip 2}, for every finite $\Delta \subset \Latd_{d-1} \times \integers_{T}(0)$, the measure $\mathcal{P}_{\lambda}^{\text{slab}} \left( \cdot \mid \eta \equiv 0 \text{ on } \Delta   \right)$ stochastically dominates a Bernoulli product measure with density $\rho$ on $\Latd_{d-1} \times \left(\integers_{T} \setminus \integers_{T}(0) \right)$.
\end{cor}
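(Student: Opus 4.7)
The plan is to derive the corollary as a direct by-product of the proof of Theorem \ref{thm spin-flip 2}, by observing that both statements follow from the same uniform one-site conditional lower bound.

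First, I would extract from the proof of Theorem \ref{thm spin-flip 2} its intermediate estimate. That proof combines the downward FKG property of $\mathcal{P}_\lambda^{\text{slab}}$ on the slab (Lemma \ref{lem DFKG for CP}) with the general Liggett--Steif criterion (Lemma \ref{lem DFKG domi1}), and in doing so establishes the \emph{uniform} bound
\begin{align}\label{eq key one site bound}
\mathcal{P}_\lambda^{\text{slab}}\bigl(\eta(x) = 1 \mid \eta \equiv 0 \text{ on } \Lambda\bigr) \geq \rho
\end{align}
for every finite $\Lambda \subset \Latd_{d-1} \times \integers_T$ and every $x \in (\Latd_{d-1} \times \integers_T) \setminus \Lambda$, with the same $\rho = \rho(\lambda,T,1)$ as in the theorem. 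This uniform lower bound is the source of everything that follows.

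Next, I would fix a finite $\Delta \subset \Latd_{d-1} \times \integers_T(0)$ and set
\begin{align}
\nu \;:=\; \mathcal{P}_\lambda^{\text{slab}}\bigl(\cdot \mid \eta \equiv 0 \text{ on } \Delta\bigr),
\end{align}
regarded, via marginalisation, as a measure on $\{0,1\}^{\Latd_{d-1} \times (\integers_T \setminus \integers_T(0))}$. The plan is then to verify both hypotheses of Lemma \ref{lem DFKG domi1} for $\nu$. For the downward FKG hypothesis: for any finite $\Lambda'$ in the future part of the slab,
\begin{align}
\nu\bigl(\cdot \mid \eta \equiv 0 \text{ on } \Lambda'\bigr) = \mathcal{P}_\lambda^{\text{slab}}\bigl(\cdot \mid \eta \equiv 0 \text{ on } \Delta \cup \Lambda'\bigr),
\end{align}
whose positive association on $\{0,1\}^{\Latd_{d-1} \times \integers_T}$ is exactly Lemma \ref{lem DFKG for CP} and, since positive association passes to marginals, is inherited on the future. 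For the one-site estimate, applying \eqref{eq key one site bound} with $\Lambda = \Delta \cup \Lambda'$ (still a finite subset of the slab) gives
\begin{align}
\nu\bigl(\eta(x) = 1 \mid \eta \equiv 0 \text{ on } \Lambda'\bigr) \geq \rho
\end{align}
for every $x$ in the future that lies outside $\Lambda'$. With both hypotheses verified, Lemma \ref{lem DFKG domi1} yields the claimed stochastic domination of $\nu$ over the Bernoulli product measure of density $\rho$ on $\Latd_{d-1} \times (\integers_T \setminus \integers_T(0))$.

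The step most likely to need care is the extraction of \eqref{eq key one site bound} with the \emph{same} constant $\rho$ uniformly over all finite $\Lambda$. This is essentially automatic, however, because Lemma \ref{lem DFKG domi1} is an ``iff'' criterion, so any proof of the unconditional domination in Theorem \ref{thm spin-flip 2} via this lemma must in fact verify the uniform one-site lower bound as part of its hypothesis check, and no new work is needed to reuse it here with $\Lambda = \Delta \cup \Lambda'$ in place of $\Lambda'$.
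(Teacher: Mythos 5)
There is a genuine gap, and it sits exactly where you flagged the risk: the extraction of your uniform bound \eqref{eq key one site bound}. Property 3 of Lemma \ref{lem DFKG domi1} (and of its $d$-dimensional analogue, Lemma \ref{lem DFKG domi}) is \emph{not} the statement that $\mu(\eta(x)=1\mid\eta\equiv 0\text{ on }\Lambda)\geq\rho$ for \emph{arbitrary} finite $\Lambda$; it only allows conditioning on finite subsets of the one-sided lexicographic past ($\{1,2,\dots\}$, resp.\ $\mathcal{D}$). So the ``iff'' structure of the criterion does not hand you \eqref{eq key one site bound}: verifying domination via that lemma never requires, and never produces, a lower bound when $\Lambda$ contains sites in the \emph{future} of $x$. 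In your application this matters, because $\Lambda'$ may well contain space-time points at later times than $x$, so you genuinely need the two-sided version. That two-sided bound is not established anywhere in the proof of Theorem \ref{thm spin-flip 2}, and for the contact process it is far from clear it holds with the same $\rho$ (conditioning the site $x$ to be healthy at many times shortly after $s$ can depress $\mathcal{P}(\eta_s(x)=1\mid\cdot)$; dFKG gives you monotonicity in $\Lambda$, hence only an upper bound on such conditional probabilities, not a lower one). A second, more mechanical problem: Lemma \ref{lem DFKG domi1} is stated for translation-invariant measures on $\{0,1\}^{\integers}$, whereas your $\nu$ lives on $\{0,1\}^{\Latd_{d-1}\times(\integers_T\setminus\integers_T(0))}$, which is not indexed by $\integers$ once $d\geq 2$, and $\nu$ is not translation invariant after conditioning on $\Delta$.

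The paper's route repairs both issues at once and is worth comparing against. It applies the $d$-dimensional criterion, Lemma \ref{lem DFKG domi}, to the \emph{unconditioned}, translation-invariant, dFKG measure $\mathcal{P}_{\lambda}^{\text{slab}}$ on $\{0,1\}^{\Latd_{d-1}\times\integers_T}\cong\{0,1\}^{\Latd}$. Theorem \ref{thm spin-flip 2} with $m=1$ gives Property 2 there (the $(1-\rho)^{n^d}$ bound on boxes being all healthy), hence Property 3 holds: the conditional probability of a $1$ at the origin, given any finite configuration of $0$'s and $1$'s in the lexicographic past $\mathcal{D}$, is at least $\rho$. Choosing the lexicographic order with the time coordinate most significant, $\Delta\subset\Latd_{d-1}\times\integers_T(0)$ lies in the lexicographic past of every future site, so one can reveal the future sites one by one in lexicographic order, each time conditioning only on $\Delta$ together with already-revealed (hence lexicographically earlier) sites; Property 3 then gives the conditional domination. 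If you want to salvage your write-up, replace \eqref{eq key one site bound} by Property 3 of Lemma \ref{lem DFKG domi} (past-only conditioning, but allowing both $0$'s and $1$'s there) and run the sequential coupling on the future in lexicographic order, rather than trying to feed the conditioned measure $\nu$ back into the one-dimensional criterion.
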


Corollary \ref{cor spin-flip 2} implies that the contact process projected on $\Latd_{d-1} \times \integers_{T}$  has strong mixing properties. We next make precise what we mean by strong mixing properties.\medskip

Fix $ T \in (0,\infty)$  and let, for $\theta \in (0,\frac{1}{2} \pi)$ and $t \geq 0$,
\begin{align}
C_t^{\theta} := \left\{ (x,s) \in \Latd_{d-1} \times \integers_{T} \colon \norm{x} \leq (s-t)  \tan \theta \right\}
\end{align}
be the cone whose tip is at $(o,t)$ and whose wedge opens up with angle $\theta$, where $o\in \Latd$ denotes the origin. A process $(\xi_t)_{t\in \integers_T}$ on $\{0,1\}^{\Latd_{d-1}}$ is said to be \emph{cone-mixing} if, for all $\theta \in (0,\frac{1}{2}\pi)$,
\begin{align}\label{eq cone-mixing}
\lim_{t \rightarrow \infty} \sup_{\substack{A \in \mathcal{F}_{<0}, B \in \mathcal{F}_t^{\theta} \\ \bP(A)>0}} \left| \bP(B\mid A) - \bP(B) \right| = 0,
\end{align}
where $\mathcal{F}_{<0}$  is the $\sigma$-algebra generated by the lower half-space $ \{ \xi_s(x)\colon (x,s) \in \Latd_{d-1}\times \integers_{T}(0) \}$ and $\mathcal{F}_t^{\theta}$ is the $\sigma$-algebra generated by $ \{ \xi_s(x) \colon (x,s) \in C_t^{\theta} \}$. 

\begin{thm}\label{prop cone-mixing}
Let $T \in (0,\infty)$.
The upper stationary contact process  on $\Latd$,  $d\geq 1$, with $\lambda>\lambda_c$, projected onto $\Latd_{d-1}\times \integers_{T}$, is cone-mixing.
\end{thm}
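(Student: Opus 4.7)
The plan is to deduce cone-mixing from Corollary \ref{cor spin-flip 2}, using the Markov property of the full contact process on $\Latd$ together with uniform exponential relaxation of the supercritical contact process to its upper invariant measure.

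First I would reduce \eqref{eq cone-mixing} to the case in which $B$ is an increasing cylinder event of the form $B = \{\xi_{s_i}(x_i) = 1 \colon 1 \leq i \leq n\}$ with $(x_i,s_i) \in C_t^{\theta}$. This is standard, since such cylinders generate $\mathcal{F}_t^{\theta}$ and general events can be decomposed into finite combinations of monotone cylinders.

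Next I would exploit the Markov property of the \emph{full} contact process $(\eta_s)_{s \in \mathbb{R}}$ on $\Latd$ at time $0$ (the slab projection is not Markov, but the full process is): for each $A \in \mathcal{F}_{<0}$ with $\bP(A) > 0$,
\begin{equation*}
\mathcal{P}_{\lambda}^{\text{slab}}(B \mid A) \;=\; \erwi{\lambda}{\,\probc{B}{\eta_0}\,\bigm|\, A\,},
\end{equation*}
so that cone-mixing reduces to uniformly bounding $|\probc{B}{\eta_0} - \prob{B}|$ over $\eta_0$ drawn from the family of conditional laws $\{\text{Law}(\eta_0 \mid A)\}$. Here Corollary \ref{cor spin-flip 2} (with $m=1$) combined with Lemma \ref{lem DFKG for CP} is the crucial input: together they ensure that every such conditional law stochastically dominates a non-trivial Bernoulli product measure $\mu_{\rho}$ on $\Latd$, so that $\eta_0$ carries a positive density of $1$'s almost surely, uniformly in $A$.

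Finally I would appeal to exponential relaxation of the supercritical contact process to $\bar{\nu}_{\lambda}$: starting from any $\eta_0$ in the class just described, the law of $\eta_t$ on any finite spatial window converges exponentially fast in $t$ to the corresponding marginal of $\bar{\nu}_{\lambda}$. This can be established via the graphical construction and the exponential survival estimates \eqref{eq survival}, with a rate that is uniform over the admissible $\eta_0$. Since $B \in \mathcal{F}_t^{\theta}$ confines the spatial footprint of $B$ to within $O(t \tan\theta)$ of the origin, the exponential decay in $t$ absorbs the polynomial growth in window size, yielding $|\probc{B}{\eta_0} - \prob{B}| \leq C(n)\,e^{-ct}$ uniformly in $\eta_0$; integrating against $\text{Law}(\eta_0 \mid A)$ and taking the supremum in $A, B$ proves \eqref{eq cone-mixing}.

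The main obstacle I expect is precisely this uniform exponential relaxation step: one must build a graphical coupling of two supercritical contact processes started from arbitrary configurations in the dominated class, with a convergence rate that does not depend on the initial configurations, and this is exactly where the uniform Bernoulli domination from Corollary \ref{cor spin-flip 2} does essential work.
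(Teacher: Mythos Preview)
Your plan is essentially the paper's own argument: use Corollary \ref{cor spin-flip 2} to sandwich the time-$0$ conditional law between a fixed Bernoulli measure $\mu$ (on $\Latd_{d-1}$, zero elsewhere) and $\delta_{\bar 1}$, then couple the two extremal contact processes via the graphical representation and show that the probability they disagree anywhere on $C_t^\theta$ decays in $t$. The paper carries this out directly with the sandwich $\mathbb P_\mu(B)\le \mathcal P_\lambda^{\text{slab}}(B\mid A),\,\mathcal P_\lambda^{\text{slab}}(B)\le \mathbb P_{\bar 1}(B)$ for increasing $B$, rather than by first disintegrating over $\eta_0$, but the content is the same.

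One inaccuracy to fix: the statement that ``$B\in\mathcal F_t^\theta$ confines the spatial footprint of $B$ to within $O(t\tan\theta)$ of the origin'' is wrong. The cone $C_t^\theta$ is unbounded in time, and at level $s\ge t$ its spatial width is $(s-t)\tan\theta$, which is unbounded. Consequently your bound $C(n)e^{-ct}$, with $C(n)$ depending on the particular cylinder $B$, does not survive the supremum over $B\in\mathcal F_t^\theta$. The correct move (as in the paper) is to bound, for increasing $B$, by the $B$-independent quantity $\widehat{\mathcal P}_{\mu,\delta_{\bar 1}}(\eta^1\neq\eta^2\text{ on }C_t^\theta)$, then control this via a union bound $\sum_{(x,s)\in C_t^\theta}\widehat{\mathcal P}_{\mu,\delta_{\bar 1}}(\eta^1_s(x)\neq\eta^2_s(x))$ together with translation invariance in $\Latd_{d-1}$ and an exponential single-site estimate in $s$ (the paper invokes \cite[Theorem 1.2.30]{LiggettSIS1999} and \cite{GaretMarchandLDPCPRE2013}). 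The exponential decay in $s$ then beats the polynomial growth $\sim(s-t)^{d-1}$ of the number of cone points at level $s$, which is the correct version of your ``exponential absorbs polynomial'' remark.
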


Cone-mixing was introduced in \citet{CometsZeitouniLLNforRWME2004} and used there to prove limiting properties for certain random walks in mixing random environment. More recently, the cone-mixing condition has been adapted to random walks in dynamically evolving random environments, see Avena, den Hollander and Redig \cite{AvenaHollanderRedigRWDRELLN2011}. 
For such models, a standing challenge is to prove limit properties for the random walk when the dynamic environment does not converge towards a unique stationary distribution, uniformly with respect to the initial state. \medskip

Theorem \ref{prop cone-mixing} gives one way to overcome this challenge for the particular case
where the random environment is the contact process and the random walk stays inside $\Latd_{d-1}$.  Our result has recently been applied in \citet{BethuelsenVolleringRWDRE2016} (see Theorem 2.6 therein)  to prove (among other things) a law of large numbers for such random walks.

\section{Preliminaries}\label{sec preliminaries}

In this section we provide some preliminary results which are important for the proofs of our theorems. 

\subsection{Downward FKG and related properties}\label{sec DFKG}

As already mentioned, the concept of downward FKG (from now on abbreviated by dFKG) plays a key role in the proof of our main theorems. We next provide a definition of this  and some related properties.

\begin{defn}\label{def FKG}
Let $\mu\in \mathcal{M}_1(\Omega)$. We say that $\mu$ is
\begin{description}
\item[a)] positively associated if  $\mu(B_1 \cap B_2) \geq \mu(B_1) \mu(B_2)$ for any two increasing events $B_1,B_2 \in \mathcal{F}$.
\item[b)] dFKG if for every finite $\Lambda\subset V$, the measure $\mu(\cdot \mid \eta \equiv 0 \text{ on } \Lambda)$  is positively associated.
\item[c)] FKG if for every finite $\Lambda \subset V$ and $\sigma \in \Omega$, the measure $\mu(\cdot \mid \eta \equiv \sigma \text{ on } \Lambda )$ is positively associated.
\end{description}
\end{defn}

It is immediate that FKG implies  dFKG, which again implies positive association. The Bernoulli product measures $\mu_{\rho}$, $\rho \in [0,1]$, are examples of measures which clearly satisfy the FKG property. In  \cite{LiggettIPS1994} it was shown that the upper invariant measure is not always FKG,
       whereas  \cite{BergHaggstromKahn2006} proved that it satisfies the dFKG property (see Theorem
       3.3 and Equation (20) in  that paper). With the same arguments as in  \cite{BergHaggstromKahn2006} the
       latter property can be extended to the following lemma.

\begin{lem}\label{lem DFKG for CP}
Consider the upper stationary contact process $(\eta_t)$ on $G=(V,E)$ with $\lambda>0$.
For any $t_1<t_2< \dots < t_n$ the joint distribution of $(\eta_{t_1},\dots,\eta_{t_n})$, which is a probability measure on $\Omega^n$,
satisfies the dFKG property.
\end{lem}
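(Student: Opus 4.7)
The plan is to adapt the argument of Berg, H\"aggstr\"om and Kahn \cite{BergHaggstromKahn2006} essentially verbatim to the space-time setting. First, I would set up the graphical (Harris) construction of the contact process, using independent Poisson processes: infection arrows $I(y,x)$ of rate $\lambda$ on each directed edge, and recovery marks $H(x)$ of rate $1$ at each site. Working with a finite-volume approximation on boxes $V_n \uparrow V$ and starting from the all-$1$ configuration at some time $-s$, the joint law of $(\eta_{t_1},\dots,\eta_{t_n})$ converges, as $n\to\infty$ and $s\to\infty$, to the upper stationary joint distribution by the standard monotonicity (attractiveness) of the contact process.

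For fixed $n$ and $s$, the configuration at each space-time point $(x,t_i)$ is a deterministic function of finitely many independent Poisson clocks, and $\{\eta_{t_i}(x)=1\}$ is increasing in the arrow processes and decreasing in the recovery processes. This is precisely the monotonicity structure exploited in \cite{BergHaggstromKahn2006}: starting from a product measure on independent random variables (here the Poisson clocks) and given finitely many events each of which is increasing in some coordinates and decreasing in others (here the $\{\eta_{t_i}(x)=1\}$), one obtains the dFKG property of the joint distribution of the indicators. Their Theorem~3.3 and the argument surrounding their Equation~(20) is exactly of this form; note that it never uses that the observed events are all at a single time, only the monotone dependence of each on the independent clocks.

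The main point to verify --- and the only real obstacle --- is thus that no step in \cite{BergHaggstromKahn2006} relies on co-temporality of the observed sites. Their argument proceeds by (i) expressing the conditioning $\{\eta \equiv 0 \text{ on } \Lambda\}$ as the intersection of decreasing events on the Poisson clock space, and (ii) applying a Holley-type inequality / clever recoupling of the clocks. Both ingredients depend only on the independence of the clocks and the monotone dependence of each observation event on them, both of which hold with $\Lambda$ now a finite subset of space-time. So the same proof gives dFKG for the conditional measure on $\Omega^n$ given any such finite space-time zero-set.

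Finally, one passes to the limit $n\to\infty$, $s\to\infty$. Positive association of a sequence of measures on $\Omega^n$ is preserved under weak limits (by approximating bounded increasing functions by cylinder ones), and the finite-$\Lambda$ conditional measures for the upper stationary process are weak limits of the corresponding finite-volume conditional measures, again by the monotone convergence of the contact process. This transfers the dFKG property to the full space-time joint distribution and completes the argument.
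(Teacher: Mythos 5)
Your proposal is correct and follows essentially the same route as the paper, which simply observes that the proof of Theorem 3.3 in \cite{BergHaggstromKahn2006} carries over verbatim to the space-time setting; your elaboration of why that argument never uses co-temporality of the observed sites (only the monotone dependence of each observation event on the independent Poisson clocks, plus finite-volume approximation and weak limits) is exactly the justification the paper leaves implicit.
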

\begin{proof}
The proof is exactly the same as the proof of Theorem 3.3 in \cite{BergHaggstromKahn2006}.
\end{proof}

The following lemma gives a useful property, used in the proof of Theorem \ref{thm spin flip 4}.

\begin{lem}\label{lem dfkg max}
Let $V$ be countable and assume that the random variables $(X_i)_{i \in V}$ are dFKG. Let $P=(P_j)_{j \geq 1}$ be a partitioning of $V$ into disjoint subsets. Then the random variables $(Y_j)_{j\geq1}$ where
$Y_j = \max \{ X_i, \: i \in P_j \}$ 
are dFKG.
\end{lem}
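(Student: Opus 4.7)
The plan is to express the dFKG condition for $(Y_j)$ in terms of an ``extended'' dFKG condition for $(X_i)$ that permits conditioning on $\{X_i=0\}$ for $i$ in a countable (rather than merely finite) set, and then to obtain this extension from the finite-conditioning hypothesis via continuity of measure.

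First, fix a finite $S \subset \nat$ and put $\Lambda := \bigcup_{j \in S} P_j$. Since the $P_j$ are pairwise disjoint, $\{Y_j = 0 \text{ for all } j \in S\} = \{X_i = 0 \text{ for all } i \in \Lambda\}$. Moreover, each $Y_j = \max\{X_i : i \in P_j\}$ is a non-decreasing function of $(X_i)_{i \in V}$, so any increasing event in $(Y_j)_{j \geq 1}$ is also increasing when regarded as an event in $(X_i)_{i \in V}$. Hence dFKG for $(Y_j)$ reduces to showing that, for any two increasing events $B_1, B_2$ in the $X_i$'s,
\[ \bP(B_1 \cap B_2 \mid X_i = 0 \: \forall i \in \Lambda) \geq \bP(B_1 \mid X_i = 0 \: \forall i \in \Lambda) \, \bP(B_2 \mid X_i = 0 \: \forall i \in \Lambda). \]

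If $\Lambda$ is finite, this is precisely the dFKG hypothesis on $(X_i)$, and we are done. Otherwise I would enumerate $\Lambda = \{i_1, i_2, \ldots\}$, set $\Lambda_n := \{i_1, \ldots, i_n\}$, apply dFKG with $\Lambda_n$ in place of $\Lambda$ for each $n$, and then pass to $n \to \infty$. By continuity of measure, $\bP(X_i = 0 \: \forall i \in \Lambda_n) \downarrow \bP(X_i = 0 \: \forall i \in \Lambda)$ and similarly for the joint probabilities with each $B_k$; dividing and letting $n \to \infty$ yields the desired inequality whenever the limiting probability is strictly positive. The main (mild) obstacle is the degenerate case where this probability vanishes, which I would handle by interpreting the conditional measure as the weak limit of $\bP(\cdot \mid X_i = 0 \: \forall i \in \Lambda_n)$ (equivalently, via a regular conditional distribution), noting that positive association passes to such weak limits since it is expressible through expectations of bounded increasing test functions, which converge by bounded convergence.
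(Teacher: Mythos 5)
Your proof is correct and follows the same route as the paper's: the $Y_j$ are increasing functions of the $X_i$ and $\{Y_j=0,\ j\in S\}=\{X_i=0,\ i\in\bigcup_{j\in S}P_j\}$, so positive association of the conditioned law of $(Y_j)$ reduces directly to the dFKG property of $(X_i)$. The paper's one-line proof tacitly treats the resulting conditioning set as finite; your additional limiting argument for infinite blocks $P_j$ (extending dFKG to countable conditioning sets by continuity of measure) is a correct and welcome refinement, though it is not needed in the paper's application, where each block is finite.
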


\begin{proof}
This follows easily from the dFKG property of $(X_i)$. (Use that the $Y_j$'s are increasing functions of $(X_i)$ and that $\{ Y_j =0\} = \{ X_i=0, i \in P_j\}$).
\end{proof}

The dFKG property was used in \cite{LiggettSteifSD2006} to give a sufficient and necessary condition for a translation invariant measure $\mu$ on $\{0,1\}^{\integers}$ to dominate a Bernoulli product measure with density $\rho\in [0,1]$. Since their result plays an important role for our proofs, we recall the precise statement. 

\begin{lem}[Theorem 1.2 in \cite{LiggettSteifSD2006}]\label{lem DFKG domi1}
Let $V= \integers$ and let $\mu \in \mathcal{M}_1(\Omega)$ be a translation invariant measure on $\{0,1\}^{\integers}$ which is dFKG.  Then the following are equivalent. 
\begin{enumerate}
\item $\mu$ stochastically dominates $\mu_{\rho}$.
\item $\mu( \eta \equiv 0 \text{ on } \{1,2,\dots, n\} ) \leq (1-\rho)^n$ for all $n$.
\item For all disjoint, finite subsets $\Lambda$ and $\Delta$ of $\{1,2,3,\dots\}$, we have
\begin{align}
\mu \left( \eta(0)=1 \mid \eta \equiv 0 \text{ on } \Lambda, \eta \equiv 1 \text{ on } \Delta \right) \geq \rho.
\end{align}
\end{enumerate}
\end{lem}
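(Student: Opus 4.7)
The plan is to prove the equivalence by showing $(1)\Rightarrow(2)$, $(2)\Rightarrow(3)$, and $(3)\Rightarrow(1)$. The direction $(1)\Rightarrow(2)$ is immediate: $\{\eta \equiv 0 \text{ on } \{1,\dots,n\}\}$ is a decreasing event, and stochastic domination of $\mu_\rho$ by $\mu$ forces $\mu(A) \le \mu_\rho(A) = (1-\rho)^n$ for any such $A$.

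For $(3)\Rightarrow(1)$, I would construct the coupling on each finite window $[-N,N]$ and pass to the limit. Revealing $\eta(k)$ from right to left, $k = N, N-1, \dots, -N$, translation invariance applied to condition (3) yields
\begin{align}
\mu\bigl(\eta(k)=1 \bigm| \eta(k+1)=\xi_{k+1},\,\dots,\,\eta(N)=\xi_N\bigr) \ge \rho
\end{align}
uniformly in the previously revealed $\xi$. A common uniform random variable $U_k$ then realises both $\eta(k)$ from its conditional distribution and an independent $\text{Bernoulli}(\rho)$ variable $Z_k$ with $Z_k \le \eta(k)$. Passing from the finite-window couplings to a coupling on all of $\integers$ uses compactness of $\{0,1\}^{\integers}$.

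The substantive step is $(2)\Rightarrow(3)$. A first reduction uses dFKG: conditional on $\{\eta\equiv 0 \text{ on }\Lambda\}$ the measure $\mu$ is positively associated, so since both $\{\eta(0)=1\}$ and $\{\eta\equiv 1 \text{ on }\Delta\}$ are increasing, further conditioning on the latter can only raise the probability of the former. It therefore suffices to prove that for every finite $\Lambda \subset \{1,2,\dots\}$,
\begin{align}
\mu\bigl(\eta(0)=0 \bigm| \eta\equiv 0 \text{ on }\Lambda\bigr) \le 1-\rho.
\end{align}
By dFKG again, enlarging the zero-conditioning set only raises the conditional probability of the decreasing event $\{\eta(0)=0\}$, so for $\Lambda \subset \{1,\dots,N\}$ the left-hand side is dominated by $\varepsilon_N := \mu(\eta(0)=0 \mid \eta\equiv 0 \text{ on }\{1,\dots,N\})$; the sequence $(\varepsilon_N)$ is itself non-decreasing and thus converges to some $\varepsilon_\infty \in [0,1]$.

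The final and critical step combines condition (2) with a telescoping identity. The chain rule and translation invariance give
\begin{align}
\mu\bigl(\eta\equiv 0 \text{ on }\{0,\dots,N-1\}\bigr) = \prod_{k=0}^{N-1} \varepsilon_k,
\end{align}
so condition (2) yields $\prod_{k=0}^{N-1}\varepsilon_k \le (1-\rho)^N$ for all $N$. If $\varepsilon_\infty > 1-\rho$, then for large $N$ this product is at least $c\,\varepsilon_\infty^N$ for some $c>0$, eventually exceeding $(1-\rho)^N$: contradiction. Hence $\varepsilon_N \le \varepsilon_\infty \le 1-\rho$ for every $N$, giving the required bound. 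The main obstacle is precisely this inference: condition (2) bounds only the product $\prod \varepsilon_k$, and without the monotonicity of $(\varepsilon_N)$ supplied by dFKG, the geometric product bound cannot be transferred to any individual factor.
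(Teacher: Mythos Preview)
The paper does not give its own proof of this lemma; it simply quotes it as Theorem~1.2 of \cite{LiggettSteifSD2006}. Your argument is correct and is essentially the Liggett--Steif proof: the telescoping-plus-monotonicity step you isolate for $(2)\Rightarrow(3)$ is exactly the mechanism the present paper later reproduces (in a continuous-time variant) inside the proof of Theorem~\ref{thm spin flip 3}, and the sequential coupling you use for $(3)\Rightarrow(1)$ is the standard one.
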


In \cite{LiggettSteifSD2006} also a generalisation of Lemma \ref{lem DFKG domi1} to measures on $\{0,1\}^{\Latd}$ with $d\geq2$ is presented. Though most of our arguments only use Lemma \ref{lem DFKG domi1}, for the proof of Corollary \ref{cor spin-flip 2} we need the higher dimensional version, which we state below. We use the notation
\[ \mathcal{D} :=\left\{ (x_1,\dots, x_d) \in \Latd \colon \exists m \text{ such that } x_i = 0\: \forall i<m \text{ and } x_m <0\right\}.\]

\begin{lem}[Theorem 4.1 in \cite{LiggettSteifSD2006}]\label{lem DFKG domi}
Let $V= \Latd$ with $d \geq 2$ and let $\mu \in \mathcal{M}_1(\Omega)$ be a translation invariant measure on $\{0,1\}^{\Latd}$ which is dFKG.  Then the following are equivalent. 
\begin{enumerate}
\item $\mu$ stochastically dominates $\mu_{\rho}$.
\item $\mu( \eta \equiv 0 \text{ on } [1,n]^d ) \leq (1-\rho)^{n^d}$ for all $n$.
\item For all disjoint, finite subsets $\Lambda$ and $\Delta$ of $\mathcal{D}$, we have
\begin{align}
\mu \left( \eta(o)=1 \mid \eta \equiv 0 \text{ on } \Lambda, \eta \equiv 1 \text{ on } \Delta \right) \geq \rho.
\end{align}
\end{enumerate}
\end{lem}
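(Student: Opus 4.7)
My plan is to prove the three implications (1) $\Rightarrow$ (2), (3) $\Rightarrow$ (1), and (2) $\Rightarrow$ (3), with the last being the substantial one. The first is immediate: the event $\{\eta \equiv 0 \text{ on } [1,n]^d\}$ is decreasing, so stochastic domination of $\mu_\rho$ by $\mu$ yields $\mu(\eta \equiv 0 \text{ on } [1,n]^d) \leq \mu_\rho(\eta \equiv 0 \text{ on } [1,n]^d) = (1-\rho)^{n^d}$.

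For (3) $\Rightarrow$ (1), I would build a greedy coupling on every finite $F \subset \Latd$ and extract the global domination from a standard cylinder argument (or, equivalently, Strassen's theorem on the compact product space $\{0,1\}^{\Latd}$). Enumerate $F$ in lexicographic order as $x_1, \ldots, x_{|F|}$ and draw i.i.d.\ uniform random variables $U_i$ on $[0,1]$. Set $\xi(x_i) := \ind_{U_i \leq \rho}$ and $\eta(x_i) := \ind_{U_i \leq p_i}$, where $p_i$ is the $\mu$-conditional probability that $\eta(x_i)=1$ given the values already assigned at $x_1, \ldots, x_{i-1}$. Since the lex-predecessors of $x_i$ inside $F$ translate (by $-x_i$) into a subset of $\mathcal{D}$, translation invariance of $\mu$ together with (3) forces $p_i \geq \rho$, and hence $\eta(x_i) \geq \xi(x_i)$ coordinatewise.

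For (2) $\Rightarrow$ (3), I would first remove the conditioning on $\eta \equiv 1$ on $\Delta$: under the dFKG measure $\mu(\cdot \mid \eta \equiv 0 \text{ on } \Lambda)$, the two increasing events $\{\eta(o)=1\}$ and $\{\eta \equiv 1 \text{ on } \Delta\}$ are positively correlated, giving
\[ \mu(\eta(o)=1 \mid \eta \equiv 0 \text{ on } \Lambda, \: \eta \equiv 1 \text{ on } \Delta) \geq \mu(\eta(o)=1 \mid \eta \equiv 0 \text{ on } \Lambda). \]
Applying the same principle to the two \emph{decreasing} events $\{\eta(o)=0\}$ and $\{\eta \equiv 0 \text{ on } \Lambda' \setminus \Lambda\}$ shows that $\mu(\eta(o)=0 \mid \eta \equiv 0 \text{ on } \cdot)$ is non-decreasing when its conditioning set is enlarged, so it suffices to bound
\[ \pi_k := \mu(\eta(o)=0 \mid \eta \equiv 0 \text{ on } \mathcal{D} \cap [-k,k]^d) \]
by $1-\rho$ for each $k$ and then let $k \to \infty$.

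To estimate $\pi_k$ I would telescope $\mu(\eta \equiv 0 \text{ on } [1,n]^d)$ along the lex order on the cube. Writing $\Lambda_x$ for the set of lex-predecessors of $x$ inside $[1,n]^d$,
\[ \mu(\eta \equiv 0 \text{ on } [1,n]^d) = \prod_{x \in [1,n]^d} \mu(\eta(x)=0 \mid \eta \equiv 0 \text{ on } \Lambda_x). \]
For every $x$ at lattice distance at least $k$ from the boundary of $[1,n]^d$, the shifted cone $x + (\mathcal{D} \cap [-k,k]^d)$ sits inside $\Lambda_x$, so the dFKG monotonicity combined with translation invariance makes the corresponding factor at least $\pi_k$; the remaining factors are bounded below by $\pi_0 := \mu(\eta(o)=0)$. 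Assumption (2) then yields $(1-\rho)^{n^d} \geq \pi_k^{(n-2k)^d} \pi_0^{n^d - (n-2k)^d}$, and extracting $n^d$-th roots with $k$ fixed produces $\pi_k \leq 1-\rho$, completing the argument. The main technical point is keeping the several dFKG monotonicity applications and the translation of lex-predecessors into $\mathcal{D}$ carefully aligned; the fact that positive association extends from increasing to decreasing events (via complements) is used tacitly in the monotonicity step.
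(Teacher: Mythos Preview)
Your proof is correct and follows precisely the approach of \cite{LiggettSteifSD2006}, which is all the paper itself does here: it states the lemma with a reference to that source and remarks that the $d=2$ argument there extends verbatim to general $d$. Your write-up is exactly that extension, including the lexicographic sequential coupling for (3) $\Rightarrow$ (1), the dFKG reduction from mixed conditioning to pure $0$-conditioning, and the telescoping of $\mu(\eta\equiv 0 \text{ on } [1,n]^d)$ along the lex order with interior/boundary separation for (2) $\Rightarrow$ (3).
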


\begin{rem}
Lemma \ref{lem DFKG domi} was stated (and proven) in \cite{LiggettSteifSD2006} for $d=2$. However,  the extension of their argument to general dimensions is immediate and yields Lemma \ref{lem DFKG domi} (as also commented directly before the proof in \cite{LiggettSteifSD2006}, see p.\ 232 therein).
\end{rem}

\subsection{The contact process}\label{sec contact}

 We next give a brief and somewhat informal construction of the contact process via the so-called graphical representation.
 For a more thorough  description we refer to \cite{LiggettSIS1999}, p.\ 32-34. \medskip

Let $G=(V,E)$ be a connected graph having bounded degree and fix $\lambda \in (0,\infty)$. 
 Let $H:= (H(x))_{x \in V}$ and $I:=(I(x,y))_{\{x,y\}\in E}$ be two independent collections of (doubly-infinite) i.i.d Poisson processes with rate $1$ and $\lambda$, respectively.
On $V \times \reals$, draw the events of $H(x)$ as \emph{crosses} over $x$ and the events of $I(x,y)$ as \emph{arrows} from $x$ to $y$.
\medskip

For $x,y \in V$ and $ s \leq t$, we say that $(y,t)$ is connected to $(x,s)$ by a backwards path, written $(x,s) \leftarrow (y,t)$, if and only if there exists a directed path in $V \times \reals$ starting at $(y,t)$, ending at $(x,s)$ and going either backwards in time without hitting crosses or ``sideways'' following arrows in the opposite direction of the prescribed direction. Otherwise we write $(x,s) \nleftarrow (y,t)$. In general, for $\Lambda, \Delta \subset V \times \reals$, we write $\Delta \leftarrow \Lambda$ ($\Delta \nleftarrow \Lambda$) if there is a (there is no) backwards-path from $\Lambda$ to $\Delta$. Next, define the process $(\tilde{\eta}_t)$ on $\Omega$ by
\begin{align}
\tilde{\eta}_t(x) := \left\{\begin{array}{cc}1, &  \text{ if }V \times \{-\infty \} \leftarrow (x,t); \\0, & \text{otherwise},\end{array}\right.
\end{align}
where $V \times \{-\infty \} \leftarrow (x,t)$ denotes the event that there exists a backwards-path from $(x,t)$ to $V \times \{s\}$ for all $s\leq t$.
It is well known that $(\tilde{\eta}_t)$ has the same distribution as the upper stationary contact process $(\eta_t)$ with infection parameter $\lambda>0$. In the following we use  the notation $(\eta_t)$ for either representations of the contact process and denote by $\mathcal{P}_{\lambda}$ the corresponding path measure. 
\medskip

We next state a lemma which is useful for most of our proofs. The proof and the statement is inspired by \cite[Lemma 2.11]{BirknerCernyDepperschmidtRWDRE2015}. 
For its statement, recall \eqref{eq results survival} and note that, 
as follows from the graphical representation,
\begin{align}
\cP_{\lambda}\left( s<\tau^x <\infty\right) =\cP_{\lambda} \left( V \times \{-s\} \leftarrow (x,0) \text{ but } V \times \{-\infty \} \nleftarrow (x,0) \right).
\end{align}

\begin{lem}\label{lem spin-flip 1}
Consider the upper stationary contact process on a connected graph $G=(V,E)$ of bounded degree with $\lambda>0$. Let $\Delta \subset V$ and assume that there exist $\epsilon, C,c>0$ such that for all $x \in \Delta$,
\begin{align}\label{eq lem spin-flip 1 assump2}
 &\mathcal{P}_{\lambda} \left( \tau^x=\infty \right) >\epsilon;
\\ &\label{eq lem spin-flip 1 assump}
\mathcal{P}_{\lambda} \left( s<\tau^x <\infty
\right) \leq C e^{-cs}, \quad s \geq 0.
\end{align}

Then, for any $T \in (0,\infty)$, there exists $\rho=\rho(T)>0$ such that for all $n$ and all $x_1,\dots, x_n \in \Delta$; 
\begin{align}\label{eq lem spin-flip 1}
\mathcal{P}_{\lambda} \left( \eta_{Ti}(x_i)=0, i = 1,2,\dots, n\right) \leq (1-\rho)^n.
\end{align} 
\end{lem}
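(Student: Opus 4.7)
The strategy is to recast the event in terms of the graphical representation and then combine block-independence with the exponential decay assumption \eqref{eq lem spin-flip 1 assump}.

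First I would use duality. By the construction in Section~\ref{sec contact},
\[ A_i := \{\eta_{Ti}(x_i)=0\} \;=\; \{V\times\{-\infty\} \nleftarrow (x_i,Ti)\}, \]
so $A_i$ occurs iff the backward cluster $\xi^{(i)}$ from $(x_i,Ti)$ becomes extinct at some finite time. Let $\sigma_i$ be its ``backward survival time''; by duality of the graphical representation $\sigma_i$ has the same law as $\tau^{x_i}$, so by the hypotheses $\cP_\lambda(\sigma_i=\infty)>\epsilon$ and $\cP_\lambda(s<\sigma_i<\infty)\le Ce^{-cs}$.

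Next I would decompose using a block structure. Split time into blocks $B_i:=(T(i-1),Ti]$ and introduce the truncated backward survival time $\bar\sigma_i := \min(\sigma_i,T)$, which is measurable with respect to the graphical representation restricted to $B_i$; crucially $\bar\sigma_1,\dots,\bar\sigma_n$ are therefore independent, and on $\{\bar\sigma_i=T\}$ the support $S_i\subset V$ of $\xi^{(i)}$ at time $T(i-1)$ is also block-$i$ measurable. Writing
\[ A_i \;=\; \{\sigma_i<T\} \;\cup\; \bigl(\{\bar\sigma_i=T\}\cap\{\sigma_i<\infty\}\bigr), \]
the first piece is independent across $i$ and has probability at most $1-\epsilon$, while on the second piece $\{\sigma_i<\infty\}$ reduces to the extinction of a contact process started from $S_i$, using only the graphical representation in $(-\infty,T(i-1)]$.

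I would then prove the bound by induction on $n$. The inductive step amounts to showing
\[\cP_\lambda(A_n \mid A_1\cap\dots\cap A_{n-1}) \le 1-\rho\]
for some $\rho=\rho(T)>0$. To get such a bound, the plan is to lower-bound $\cP_\lambda(A_n^c \mid A_1\cap\dots\cap A_{n-1})$ by exploiting that $A_n^c \supset \{x_n\in S_n\} \cap \{\eta_{T(n-1)}(x_n)=1\}$: the first event depends only on block $n$ (and has probability at least $e^{-T}$, by requiring no recovery at $x_n$ during $B_n$), so it is independent of the conditioning and of $\eta_{T(n-1)}(x_n)$. Combining this with \eqref{eq lem spin-flip 1 assump2} (which via duality gives $\bar\nu_\lambda(\eta(x_n)=1)\ge\epsilon$) yields the desired $1-\rho$ bound after taking care of the long-dual contribution using \eqref{eq lem spin-flip 1 assump} to pay an $Ce^{-cT}$ price.

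The main obstacle will be showing that conditioning on $A_1\cap\dots\cap A_{n-1}$ does not collapse the lower bound on $\cP_\lambda(\eta_{T(n-1)}(x_n)=1 \mid \cdot)$. The dFKG property (Lemma~\ref{lem DFKG for CP}) gives positive association of the conditional measure, which unfortunately goes in the wrong direction for this conditional one-point probability. The plan is therefore to circumvent this by working with the refined event $\{x_n\in S_n\}$ from block $n$ (block-independent of the conditioning), and by decomposing the sum $\cP_\lambda(\cap A_i)$ according to the set $L=\{i:\sigma_i\ge T\}$ of ``long'' indices, using $\cP_\lambda(\sigma_i\ge T,\sigma_i<\infty)\le Ce^{-cT}$ as a Peierls weight to absorb combinatorial factors. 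If this bookkeeping succeeds, the resulting bound is of the required form $(1-\rho)^n$ with $\rho=\rho(T)>0$.
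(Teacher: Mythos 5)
Your reduction to the graphical representation is sound: the backward cluster of $(x_i,Ti)$ does have extinction time $\sigma_i$ distributed as $\tau^{x_i}$, and $\{\eta_{Ti}(x_i)=0\}=\{\sigma_i<\infty\}$. The gap is in the inductive step, which is where the whole difficulty of the lemma sits. The route $A_n^c\supset\{x_n\in S_n\}\cap\{\eta_{T(n-1)}(x_n)=1\}$ already fails when $x_n=x_{n-1}$: the conditioning event $A_{n-1}$ then forces $\eta_{T(n-1)}(x_n)=0$ and this inclusion contributes nothing, so one would have to route the infection through other sites, whose conditional occupation probabilities are exactly what is unknown. More fundamentally, a uniform lower bound on $\cP_{\lambda}(\eta_{T(n-1)}(x_n)=1\mid \bigcap_{i<n}A_i)$ — a one-point probability conditioned on a cloud of zeros — is, in view of Lemma \ref{lem DFKG domi1}, essentially equivalent to the stochastic domination statement one is trying to prove; dFKG gives positive association of that conditional law but no such lower bound, as you yourself observe. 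So the induction as framed is circular unless the "main obstacle" is actually resolved, and the proposal does not resolve it.

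The fallback you sketch (a Peierls sum over the set $L=\{i:\sigma_i\geq T\}$ of long indices) does not close the gap either: the events $\{T\leq\sigma_i<\infty\}$ for $i\in L$ are \emph{not} block-measurable — each long backward cluster reaches into arbitrarily many earlier blocks — so these events are mutually dependent and dependent on the short-cluster events, and the weights $(Ce^{-cT})^{|L|}(1-\epsilon)^{n-|L|}$ cannot simply be multiplied. (Even granting that bound, $((1-\epsilon)+Ce^{-cT})^n$ beats $1$ only for large $T$; small $T$ would need a separate thinning/subsampling step.) The paper's proof handles precisely this dependence with an \emph{adaptive} rather than fixed block structure: letting $D_i$ be the discretized depth of the backward cluster of $(x_i,Ti)$, it sets $\cT_0=0$ and $\cT_{i+1}=\cT_i+D_{n-\cT_i}$, so that each new backward cluster is started at the (random) time level where the previous one died and therefore explores a disjoint portion of the graphical representation. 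On $\bigcap_i A_i$ all the $D_i$ are finite and the chain must reach level $n$; conditionally on the past of the chain each increment is infinite with probability $>\epsilon$ and has an exponential tail by \eqref{eq lem spin-flip 1 assump}, and a standard large-deviation estimate for such defective sums yields $(1-\rho)^n$. Replacing your fixed blocks by this renewal construction is the missing idea.
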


\begin{proof}
Fix $T \in (0,\infty)$ and let $\textbf{x}=(x_i)_{i \in \integers}$ be an infinite sequence of elements $x_i \in \Delta$. For $i \in \bZ$, denote by \begin{align}D_i := \inf \{ l \in \nat \colon V \times T(i-l) \nleftarrow (x_i,T i) \},\end{align} and note that $D_i T$ yields an approximation (up to an error of at most $T$) on how far backwards in time $(x_i,T i)$ is connected to another space-time point. In particular, $\eta_{T i}(x_i) = 0$ if and only if $D_i<\infty$. 

Define $\mathcal{T}_0=0$ and, iteratively,
\begin{align}
\cT_{i+1} := \cT_i + D_{n-\cT_i}, \quad i \geq 0.
\end{align}
Let $K:= \sup \{i \colon \cT_i<\infty \}$. 
We have the following relation (easy to check) between events:
\begin{align}
\{ \eta_{iT}(x_i) =0 \text{ for } i=1,\dots,n \} &= \{ D_1,\dots,D_n <\infty \}
\\ &\subset \{\mathcal{T}_K \geq n\}.
\end{align}

Finally, $\mathcal{P}_{\lambda} (\mathcal{T}_K\geq n)$ is exponentially small in $n$. This follows by standard arguments from
the following consequences of \eqref{eq lem spin-flip 1 assump2} and \eqref{eq lem spin-flip 1 assump} (using the independence properties of the graphical representation):
for all $i$, all positive integers $t_i>t_{i-1}> \dots >t_1 \geq 1$, and all $s \geq 1$, we have
\begin{align}
&\mathcal{P}_{\lambda} \left( D_{n-\mathcal{T}_i} = \infty \mid \cT_1=t_1, \dots, \cT_i=t_i \right) >\epsilon;
\\ &\mathcal{P}_{\lambda} \left( s \leq D_{n-\mathcal{T}_i} < \infty \mid \cT_1=t_1, \dots, \cT_i=t_i \right) \leq Ce^{-c(s-1)}.
\end{align} \end{proof}



\section{Proofs}\label{sec proofs}

\subsection{Proof of Proposition \ref{prop amenable}}\label{sec proofs 1}

For the proof of Proposition \ref{prop amenable} we follow that of \cite[Proposition 1.1]{LiggettSteifSD2006}, which we extend to graphs having subexponential growth.

\begin{proof}[Proof of Proposition \ref{prop amenable}]

Let $G=(V,E)$ be a graph as in the statement of the proposition and let $\lambda \in (0, \infty)$. Fix $o \in V$, and consider $\Delta \subset  V$ having positive density. Hence, there is a $\gamma>0$ and a $N\in \nat$ such that, for all $n \geq N$, we have that $|\Delta \cap B(n)|>\gamma |B(n)|$, where $B(n) := \{ x\in V \colon d(o,x) \leq n\}$. 

Next, assume that the contact process on $G$ with infection parameter $\lambda>0$, projected onto $\Delta$, stochastically dominates a non-trivial independent spin-flip process with parameter $\alpha>0$. 
Consequently, for every $T>0$ and $n\geq N$, we have that
\begin{align}\label{eq prop amenable bound}
&\mathcal{P}_{\lambda} \left( \eta_t(x) =0 \text{ for all } (x,t) \in B(n)\times[0,T] \right) 
\\ \leq & \mathcal{P}_{\lambda} \left( \eta_t(x) =0 \text{ for all } (x,t) \in (\Delta \cap B(n)) \times[0,T] \right)
\\ \leq &e^{-\gamma \alpha|B(n)| T} = e^{-c_1|B(n)| T}, \quad \text{ where  }c_1 = \gamma \alpha.
\end{align}
Thus, the probability in \eqref{eq prop amenable bound} decays exponentially at a rate proportional to the volume of $B(n)\times[0,T]$.

To conclude the statement of Proposition \ref{prop amenable} for $\lambda>0$, we show that this estimate cannot hold and thus argue  by means of contradiction. In doing so, we make use of the graphical representation of the contact process. 

Let $A_{n,T}$ denote the event that there are no arrows in the graphical representation from sites outside $B(n)$ to any site in $B(n)$ during the time period $[0, T ]$.  Note that the l.h.s.\ of  \eqref{eq prop amenable bound} is bounded below by
\begin{align}
\mathcal{P}_{\lambda} \left( \{ \eta_0(x) = 0 \text{ for } x \in B(n) \} \cap A_{n,T} \right).
\end{align}
Moreover, this is again bounded below by
\begin{align}
&\mathcal{P}_{\lambda} \left( \{ \eta_0(x) = 0 \text{ for } x \in B(n) \} \right) e^{-\lambda d | B(n+1) \setminus B(n)| T}
\\ \label{eq prop am bound 2}\geq &\left[ \prod_{x \in B({n})}\bar{\nu}_{\lambda}(\eta_0(x) = 0 ) \right]e^{-\lambda d | B(n+1) \setminus B(n)| T},
\end{align} 
where $d$ denotes the maximum degree of $G$, and where we used that the contact process is positively associated.

Next, since $G$ has subexponential growth (and hence satisfies \eqref{eq extend subexp}), we can find $n$ large such that $\lambda d |B(n+1)\setminus B(n)| < c_1 | B(n)|$. 
For such $n$, by taking $T$ sufficiently large, the expression \eqref{eq prop am bound 2} is larger than the r.h.s.\ of \eqref{eq prop amenable bound}: a contradiction.
\end{proof}

\subsection{Proof of Theorem \ref{thm spin flip 3}}\label{sec proofs 3}
\begin{proof}[Proof of Theorem \ref{thm spin flip 3}]
Consider the upper stationary contact process $(\eta_t)$ on a connected graph $G=(V,E)$ having bounded degree and with $\lambda>0$.
Fix $x\in V$ such that  \eqref{eq survival2} and \eqref{eq survival} hold and define, for $t,s\in \reals$ with $t<s$, the event $A_{t,s}:= \{ \eta_u(x)=0 \colon u\in[t,s)\}$. 
Further, let $f\colon [0,\infty]\times[0,\infty)\rightarrow [0,1]$ denote the function
\begin{align}
f(t,u) = \cP_{\lambda} \left( A_{0,t} \mid A_{-u,0} \right).
\end{align}
Clearly, $f(t,u)$ is non-increasing in $t$.

By Lemma \ref{lem DFKG for CP} we have that, for each $n$, the collection of random variables $\left(\eta_t(y), y \in V, t\in \integers_{1/n} \right)$ is dFKG (recall from \eqref{eq notation Z} that $\integers_{1/n}$ denotes $\{ k/n \colon k \in \integers \}$).
Further, it is standard (and easy to see) that, for $t<s$,
\begin{align}
\cP_{\lambda} (A_{t,s}) = \lim_{n \rightarrow \infty} \cP_{\lambda} \left( \eta_u(x)=0 \text{ for all } u \in [t,s) \cap \integers_{1/n} \right).
\end{align}
Using this approximation, the above mentioned dFKG property, and general results for measures satisfying dFKG (see Section \ref{sec DFKG}), it follows that 
\begin{align}\label{eq one star}
f(t,u) \text{ is non-decreasing in } u,
\end{align}
so $f(t):=\lim_{u \rightarrow \infty} f(t,u)$ exists (and is $>0$) and $\cP_{\lambda}(A_{0,t}\mid B) \leq f(t)$ for all events $B$ that are measurable with respect to $(\eta_s(x), s \leq 0)$.
Further, since,
\begin{align}
f(t+s,u) &= \cP_{\lambda} (A_{0,t+s} \mid A_{-u,0})
\\ &= \cP_{\lambda}(A_{0,t} \mid A_{-u,0}) \cP_{\lambda}(A_{t,t+s} \mid A_{-u,t})
\\ &= f(t,u) f(s,t+u),
\end{align}
we get, by letting $u\rightarrow \infty$,
 $f(t+s)=f(t)f(s)$, 
from which we obtain that there is a $c\geq0$ such that 
\begin{align}\label{eq two stars}
f(t) = e^{-ct}, \quad \text{for all } t \geq 0.
\end{align}
By Lemma \ref{lem spin-flip 1} (with $T=1$), there is an $\alpha>0$ such that
\begin{align}\label{eq six stars}
\cP_{\lambda}(A_{0,t}) \leq e^{-\alpha t}, \quad t \geq 1.
\end{align}
We claim that $c \geq \alpha$ (and hence $c>0$).
The proof of this claim uses some of the arguments in the proof of Lemma \ref{lem DFKG domi1} in \cite{LiggettSteifSD2006}. For completeness, we include it here.

Suppose $c<\alpha$. Let $\alpha' \in (c,\alpha)$. Fix $t>1$ and take an integer $l$ so large that $f(t,lt)$ is `very close' to $f(t)$ (and hence, by \eqref{eq two stars}, to $e^{-ct}$). More precisely, we take $l$ sufficiently large so that 
\begin{align}\label{eq three stars}
f(t,lt)> e^{-\alpha' t}.
\end{align}
For all integers $k \geq 0$ we have that, on the one hand (by \eqref{eq six stars}),
\begin{align}\label{eq four stars}
\cP_{\lambda}(A_{0,klt}) \leq e^{-\alpha klt},
\end{align}
while on the other hand
\begin{align}
\cP_{\lambda}(A_{0,klt}) &= \cP_{\lambda}(A_{0,lt}) \prod_{i=l}^{kl-1} \cP_{\lambda} \left( A_{it,(i+1)t} \mid A_{0,it}\right)
 \\&\geq \cP_{\lambda}(A_{0,lt}) (f(t,lt))^{kl}
\\&>  \cP_{\lambda}(A_{0,lt})  e^{-\alpha'tkl},
\end{align}
where the first inequality uses \eqref{eq one star} and stationarity, and the second inequality comes from \eqref{eq three stars}.
Since $\alpha'<\alpha$ (and $\cP_{\lambda}(A_{0,lt})>0$) this violates \eqref{eq four stars} if $k$ is sufficiently large, and yields a contradiction. This proves the claim.

By the claim, and the inequality one line below \eqref{eq one star}, we have that $\cP_{\lambda}(A_{0,t} \mid B) \leq e^{-\alpha t}$ for all events $B$ that are measurable with respect to $(\eta_s(x), s\leq 0)$.

Finally, we also clearly have (by the contact process dynamics) that the conditional probability of the event $\{\eta_s(x)=1 \text{ for all } s \in (0,t)\}$, given that $\eta_0(x)=1$ and any additional information about the process before time $0$, is exactly $e^{-t}$. We conclude that the process $(\eta_s(x))$ dominates a spin-flip process which goes from state $0$ to $1$ at rate $\alpha$ and from $1$ to $0$ at rate $1$.

\end{proof}

\subsection{Proof of Theorem \ref{thm spin flip 4}}
\begin{proof}[Proof of Theorem \ref{thm spin flip 4}]
Fix $T \in (0,\infty)$ and let $\Delta \subset V$ be finite with $x \in \Delta$  such that \eqref{eq survival2} and \eqref{eq survival} hold. Furthermore, consider the doubly infinite sequence $(Y_i)_{i \in \integers}$, where $Y_i$ is given by
\begin{align}\label{eq proof thm sf 4 def}
Y_i := \max \{ \eta_{T i}(y) \colon y \in \Delta \}.
\end{align}
By Lemma \ref{lem DFKG for CP} and Lemma \ref{lem dfkg max} we note that $(Y_i)$ is dFKG, and, since the upper stationary contact process is invariant under temporal shift, the sequence is also translation invariant. By Lemma \ref{lem spin-flip 1}, there is a $\rho>0$ such that
\begin{align}
\cP_{\lambda} \left( Y_j=0, j=1,\dots n \right) \leq (1-\rho)^n.
\end{align}
Hence, by Lemma \ref{lem DFKG domi1}, we get 
\begin{align}\label{eq proof thm sf 4}
&\mathcal{P}_{\lambda} \left( Y_1=1 \mid Y_{-j}=0, j=0,\dots,n\right)\geq \rho.
\end{align}
It is not difficult to see that \eqref{eq proof thm sf 4} yields the following: for some $0< \tilde{\rho} \leq \rho$, 
\begin{align}\label{eq strong domi}
\mathcal{P}_{\lambda} \left(\eta_{T}(x) = 1 \text{ for all }x \in \Delta \mid Y_{-j}=0, j =0,\dots, n \right) \geq \tilde{\rho}, 
\end{align}
 for all  $n \in \nat$. Indeed, since the contact process evolves  in continuous-time and the graph is connected, infections can spread with positive probability from any point in $\Delta$ to all other points in $\Delta$ in a small time interval.

To make this more formal one can first consider a sequence defined similar to $(Y_i)$,  only replacing $T$ by $T/2$ in  \eqref{eq proof thm sf 4 def}.
By the same argument as above, using again the dFKG property, we have that for some $\delta>0$,
\begin{align}
\mathcal{P}_{\lambda}\left( \max \{ \eta_{T/2}(y)\colon y \in \Delta \} =1 \mid Y_{-j}=0, j=0, \dots, n\right) >\delta.
\end{align}
Furthermore, since $\Delta$ is finite, and $G$ is connected, there is an $\epsilon>0$ such that (with the notation introduced below \eqref{eq results survival})
\begin{align}
\inf_{z \in \Delta} \mathcal{P}_{\lambda} \left( \eta_{\frac{T}{2}}^z(y) = 1 \text{ for all } y \in \Delta \right) >\epsilon.
\end{align} 
Thus, using the fact that the contact process is a Markov process, we conclude \eqref{eq strong domi} with $\tilde{\rho}\geq \epsilon \delta >0$.

Finally, using again the dFKG property of the collection $( \eta_{Ti}(y), y \in \Delta, i \in \integers)$, we obtain that \eqref{eq strong domi} still holds if  the conditioning $\{Y_{-j}=0, j =0,\dots, n\}$ is replaced by any event measurable  with respect to $( \eta_{-Ti}(y), y \in \Delta, i \geq 0 )$. This concludes the proof of the theorem.
\end{proof}

\subsection{Proof of Theorem \ref{thm spin-flip tree}}\label{sec proof of spt}

To prove Theorem \ref{thm spin-flip tree} we first prove that the contact process on $\{0,1,\dots\}$  observed at the vertex $\{0\}$ stochastically dominates an independent spin-flip process. Indeed, the required estimates \eqref{eq survival2}  and \eqref{eq survival} for this context is provided by the following result in \cite{DurrettGriffeathCPhighDim1982}, see Equation (21) on page 546 therein.

\begin{lem}[\cite{DurrettGriffeathCPhighDim1982}, Equation (21), and \cite{DurretGriffeathCP1983}] \label{lem contact N}
Consider the contact process on $V= \{0,1,\dots \}$ with $\lambda>\lambda_c$. Then there exists constants $\epsilon, C,c>0$ such that \eqref{eq survival2} and \eqref{eq survival} hold.
\end{lem}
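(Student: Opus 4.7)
The plan is to invoke the results of Durrett and Griffeath cited in the lemma, but let me sketch the strategy underlying them. Both estimates rely on the well-known fact that the critical value $\lambda_c$ for the contact process on $\{0,1,\dots\}$ coincides with $\lambda_c(\integers)$ (see e.g.\ \cite{LiggettSIS1999}), so the half-line process inherits strict supercriticality from the full-line process.

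For \eqref{eq survival2}, this coincidence of critical values implies immediately that for $\lambda > \lambda_c$ the half-line contact process started from the single site $\{0\}$ survives with positive probability, which is precisely what is asserted.

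For \eqref{eq survival}, the natural route is a block/renormalization argument. Fix a spatial scale $L$ and a time scale $T$. Call a space-time block $[iL,(i+1)L]\times[jT,(j+1)T]$ \emph{good} if, conditional on the process being ``alive'' in a suitable spatial window at the bottom, it produces alive windows in at least two well-separated spatial slots at the top. Using the shape theorem for the rightmost-infected edge together with the graphical construction, for $\lambda > \lambda_c$ one can tune $L,T$ so that each block is good with probability $1-\varepsilon$ for prescribed $\varepsilon>0$, and the resulting block events form a $1$-dependent field. This couples the contact process started from $\{0\}$ to a supercritical $1$-dependent oriented percolation cluster seeded at the origin. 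For such a cluster the event of surviving until generation $n$ and then becoming extinct has probability decaying exponentially in $n$, a classical consequence of sub-critical cluster-size decay for the contour/dual process. Translating back to continuous time on the half-line yields the bound $\cP_{\lambda}(s < \tau^0 < \infty) \leq C e^{-cs}$.

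The main technical obstacle in carrying this out from scratch is setting up the block construction robustly enough to yield the exponential rate uniformly, starting from a single infection $\{0\}$ at the boundary of the half-line (where the growth is one-sided). Both this and the necessary input from the edge shape theorem are handled in \cite{DurrettGriffeathCPhighDim1982} near their Equation~(21), with complementary ingredients specific to the one-dimensional half-line supplied in \cite{DurretGriffeathCP1983}; once those are granted, the statement of the lemma follows.
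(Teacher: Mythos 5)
Your proposal matches the paper, which gives no proof of this lemma beyond the citation to Durrett--Griffeath: both you and the authors simply import Equation (21) of \cite{DurrettGriffeathCPhighDim1982} together with the half-line results of \cite{DurretGriffeathCP1983}. Your sketch of the underlying mechanism (coincidence of the half-line and full-line critical values, and a block renormalization comparison with supercritical $1$-dependent oriented percolation to get the exponential bound) is an accurate description of how those references establish the estimates, so there is nothing to correct.
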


\begin{proof}[Proof of Theorem \ref{thm spin-flip tree}]

Firstly, by Lemma \ref{lem contact N} applied to Theorem \ref{thm spin flip 3}, we have that the contact process on $\{0,1,2,\dots\}$ with $\lambda>\lambda_c$ observed at the vertex  $\{0\}$ stochastically dominates an independent spin flip process with $\alpha>0$. 

 From the above observation, the statement of Theorem \ref{thm spin-flip tree} follows by a monotonicity argument using again the graphical construction of the contact process. 
 
To make this last argument precise, fix an arbitrary point $o\in T_d$ and call it the root. Denote by $u(o)=0$ its label. Furthermore, label the remaining sites according to their distance with respect to $o$ in a unique way. That is, each $x\in T_d$ with $\norm{x-o}=1$ has a label $u(x)=(0,i)$ for some $i\in\{1,\dots,d+1\}$ and for $y \in T_d$ satisfying $\norm{y-o}=\norm{z-o}+1=n$ and $\norm{y-z}<n$, $n\geq 2$, set
$u(y)=(u(z),i)$, $i \in \{1,\dots,d\}. $ 
Thus, for each $x\in T_d\setminus \{o\}$, we have that \[u(x)\in \bigcup_{n \geq 0}\left[\{0\} \times  \{1,\dots,d+1\}  \times \{1,\dots, d\}^n\right]. \]




Denote by $\Delta\subset T_d$ the set of vertices  having as  last entry of its label a number different from $1$.
Using the graphical representation of the contact process, consider the process $(\xi_t)$ on $T_d$ where for each $(x,t) \in \Delta \times \reals$ we set $\xi_t(x)=1$  if and only if there is an infinite backwards path from $(x,t)$ constrained to infection arrows between the sites with label $\{u(x),(u(x),1),(u(x),1,1),\dots \}$. Moreover, for $x\in \Delta^c$, let $\xi_t(x)=0$ for all $t \in \reals$.

By construction, the evolution of $(\xi_t)$ on $T_d$ is dominated by that of the contact process. Furthermore, the evolution at site $x\in \Delta$ is in one-to-one correspondence with the contact process on $\{0,1,2,\dots\}$, and the evolution at different sites $x,y\in \Delta$ is independent. Thus, on the set $\Delta$ the process $(\xi_t)$ stochastically dominates a non-trivial independent spin-flip process, and consequently, so does also the contact process. Lastly, we note that, from the above construction, it holds that $\Delta$ has positive density and that the l.h.s.\ of \eqref{eq density of set} equals $\gamma= \frac{d-1}{d}>0$. This concludes the proof.

\end{proof}

\subsection{Proof of Theorem \ref{thm spin-flip 2}}\label{sec proof spin-flip 2}

In order to prove Theorem \ref{thm spin-flip 2}, we make use of the well known fact that the supercritical contact process on $\Latd$ with $d\geq 2$ survives in $2$-dimensional space-time slabs (see \cite{BezuidenhoutGrimmettCP1990}). More precisely, let,  for $k \in \nat$,  
\begin{align}
S_{k} := \left\{ x \in \Latd \colon x_i \in \{0,1,\dots,k-1\}, i=1,\dots d-1 \right\},
\end{align}
and denote by $(_k\eta_t)$  the contact process on $S_k$ and by $\mathcal{P}_{\lambda,k}$ its path measure. 
This process on $S_k$ with $\lambda>\lambda_c(\Latd)$ survives with positive probability if the width $k$ is large enough. 
The proof of this proceeds via a block argument and comparison with a certain $2$-dimensional (dependent) directed percolation model. 
This argument also gives a form of exponential decay, more precisely, the following lemma holds.
 
\begin{lem}\label{lem slabs}
Let $\lambda >\lambda_c(\Latd)$ and $d\geq2$. Then there exists $k\in \nat$  and $\epsilon, C,c \in (0,\infty)$, such that for all  $x \in S_k$,
\begin{align}
\label{eq slab2}
&\cP_{\lambda,k} \left( \tau^x=\infty \right)>\epsilon;
\\ &\label{eq slab}
\cP_{\lambda,k} \left( s<\tau^x<\infty \right) \leq Ce^{-cs}, \quad \text{ for all } s>0.
\end{align}
\end{lem}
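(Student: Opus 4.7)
The plan is to invoke the block construction of Bezuidenhout and Grimmett \cite{BezuidenhoutGrimmettCP1990}, which is precisely the tool the authors refer to. For $\lambda > \lambda_c(\mathbb{Z}^d)$ and $d \geq 2$, that construction produces, upon choosing $k$ and macroscopic space-time scales $L, T$ large enough, a local ``good'' event in the graphical representation of $(_k\eta_t)$, supported in a bounded space-time box and finite-range dependent in the block indexing, whose probability can be made arbitrarily close to $1$. Indexing the blocks by $\mathbb{Z}^2$ (one spatial, one temporal coordinate), the occurrence of a good block forces the infection to be transferred to the neighbouring blocks in the next time layer, so that $(_k\eta_t)$ stochastically dominates a finite-range dependent oriented site percolation on $\mathbb{Z}^2$ of density arbitrarily close to $1$; by the Liggett--Schonmann--Stacey theorem this in turn dominates a Bernoulli oriented percolation on $\mathbb{Z}^2$ of density $p$ as close to $1$ as desired.

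From here, \eqref{eq slab2} is immediate: for $p$ above the critical threshold, the Bernoulli oriented cluster of any block is infinite with probability at least some $\epsilon > 0$ uniformly in the starting block, and an infinite cluster in the coarse-grained picture produces an infinite infection path in $({}_k\eta_t^x)$, so $\mathcal{P}_{\lambda,k}(\tau^x = \infty) > \epsilon$ uniformly over $x \in S_k$.

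For \eqref{eq slab} I would exploit the classical fact that, for Bernoulli oriented percolation on $\mathbb{Z}^2$ with density $p$ sufficiently close to $1$, the temporal extent of a finite cluster has an exponentially decaying tail: there exist $c', C' > 0$ such that the cluster of the origin reaches height $n$ yet is finite with probability at most $C' e^{-c' n}$. This is a standard Peierls/contour estimate in the highly supercritical regime. Since the event $\{s < \tau^x < \infty\}$ forces the associated percolation cluster of the block containing $x$ to have time-extent of order $s/T$ while remaining finite, this bound transfers through the coupling with $T$-dependent constants absorbed into $C, c$, yielding \eqref{eq slab}.

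The main obstacle is the exponential tail for finite clusters in the finite-range dependent high-density percolation. Once Liggett--Schonmann--Stacey reduces matters to Bernoulli percolation, the statement is classical, but one must ensure that $k$ (and $L, T$) are taken large enough that the dominating Bernoulli density $p$ sits in the regime where the contour argument applies; this is where the freedom in the Bezuidenhout--Grimmett construction to bring $p$ arbitrarily close to $1$ is essential. Alternatively, the exponential decay can be read out directly from the restart scheme built into the Bezuidenhout--Grimmett proof: conditionally on not yet having died by time $s$, the infection contains, with probability $\geq 1 - e^{-c's}$, a macroscopic block from which an independent restart survives with probability $\geq \epsilon$, so extinction after time $s$ on $\{\tau^x < \infty\}$ entails a geometrically many independent restart failures, contributing an $e^{-cs}$ factor.
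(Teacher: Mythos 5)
Your proposal is correct and follows essentially the same route as the paper, which simply defers to the renormalization/oriented-percolation comparison in the proof of Theorem 1.2.30(a) of Liggett (1999), noting that the Bezuidenhout--Grimmett block construction already takes place in a slab of sufficiently large width $k$. The details you supply (Liggett--Schonmann--Stacey domination, exponential cluster-tail, and in particular the restart scheme, which is the cleaner way to get \eqref{eq slab} since survival of the contact process to time $s$ does not by itself force the coarse-grained cluster to reach height $s/T$) are exactly the content of that cited argument.
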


\begin{proof}
This follows again by comparison with a $2$-dimensional directed percolation model and a renormalization arguments. For a proof 
we refer the reader to the proof of Theorem 1.2.30a) in \cite{LiggettSIS1999}, where such an argument is explained in detail. Though proved there for the unrestricted contact process $(\eta_t)$ the argument works, mutatis mutandis, for $(_k\eta_t)$ as soon as $k$ is taken sufficiently large.
\end{proof}

\begin{proof}[Proof of Theorem \ref{thm spin-flip 2}]
Fix $T \in (0,\infty)$ and note that the case $d=1$ is an immediate consequence of Theorem \ref{thm spin flip 4}. Indeed, the estimates  \eqref{eq survival2} and \eqref{eq survival} for that case are known to hold due to \cite[Theorem 5]{DurretGriffeathCP1983}.

For the case $d\geq 2$ we use a slightly more involved argument, by partitioning $\Latd \times \reals$ into slabs. Fix $k$ such that \eqref{eq slab2} and \eqref{eq slab} hold. For $\textbf{i}=(i_1,\dots,i_{d-1})\in \integers^{d-1}$, let $P_{\textbf{i}}= \left( S_{k} + k \cdot (i_1,\dots,i_{d-1},0)\right) \times \reals$. Note that $P_{\textbf{i}} \cap P_{\textbf{j}}=\emptyset$ whenever $\textbf{i}\neq \textbf{j}$ and that $\bigcup_{\textbf{i} \in \integers^{d-1}} P_{\textbf{i}} = \Latd \times \bR$.

Next, consider the process $(\zeta_t)$ which is obtained from the graphical representation of the contact process on $\Latd$ by suppressing all infection arrows between slabs $P_{\textbf{j}}$.
Trivially the evolution of $(\zeta_t)$ is dominated by that of $(\eta_t)$.
Moreover, the evolution of $(\zeta_t)$ in each slab is independent of the others and has the same law as $(_k\eta_t)$.

Let $\textbf{i} \in \integers^{d-1}$. By applying Theorem \ref{thm spin flip 4} with 
$\Delta = \Latd_{d-1}(m) \cap (S_k +k\cdot (\textbf{i},o))$,  
it follows that the process $(\zeta_t)$ observed on the vertices $\Delta$ at times that are multiples of $T$ stochastically dominates a non-trivial Bernoulli product measure with density $\rho>0$. By the above mentioned independence, this implies the statement of Theorem \ref{thm spin-flip 2} for $(\zeta_t)$.
Since  $(\zeta_t)$ is stochastically dominated by $(\eta_t)$, we conclude the proof.
\end{proof}

\subsection{Proof of Corollary \ref{cor spin-flip 2}}

\begin{proof}[Proof of Corollary \ref{cor spin-flip 2}]
Fix $T \in (0,\infty)$ and recall the definition of $\cP_{\lambda}^{\text{slab}}$ in Section \ref{sec mixing}. 
Note that $\cP_{\lambda}^{\text{slab}}$ is translation invariant and that, due to Lemma \ref{lem DFKG for CP}, it is also dFKG. In particular, we may apply Lemma \ref{lem DFKG domi} to $\cP_{\lambda}^{\text{slab}}$.

 A direct consequence of Theorem  \ref{thm spin-flip 2} with $m=1$ is that whenever $\lambda>\lambda_c$, there is a $\rho>0$ such that
 \begin{align}
 \cP_{\lambda}^{\text{slab}} \left( \eta_s(x) = 0, (x,s) \in \Latd_{d-1}\times \integers_{T} \cap [1,n]^d \times [T,nT] \right) \leq (1-\rho)^{n^d}.
 \end{align}
 Hence, the measure $\cP_{\lambda}^{\text{slab}}$ satisfies Property $2$ in Lemma \ref{lem DFKG domi}. Consequently,  $\cP_{\lambda}^{\text{slab}}$ also satisfies Property $3$ in Lemma \ref{lem DFKG domi}, from which the statement of Corollary \ref{cor spin-flip 2} follows.
\end{proof}

\subsection{Proof of Theorem \ref{prop cone-mixing}}

Theorem \ref{prop cone-mixing}  follows from Corollary \ref{cor spin-flip 2} and a standard coupling argument, together with classical properties of the contact process.

\begin{proof}[Proof of Theorem \ref{prop cone-mixing}]

Fix $ T \in (0,\infty)$ and let $\rho>0$ be such that the statement of Corollary \ref{cor spin-flip 2} holds.
Next, denote by $\mu \in \mathcal{M}_1(\Omega)$ the probability measure under which all vertices outside $\Latd_{d-1}$ have value $0$ a.s., and those in $\Latd_{d-1}$ correspond with independent Bernoulli random variables with parameter $\rho$. Further, for $\eta \in \Omega$, denote by $\delta_{\eta} \in \mathcal{M}_1(\Omega)$ the probability measure which concentrates on $\eta$, and write $\bar{1}\in \Omega$ for the configuration where all sites are equal to $1$.
Then,  by Corollary \ref{cor spin-flip 2}, and since $\bar{\nu}_{\lambda} \leq \delta_{\bar{1}}$, 
we have, for $\theta \in (0,\pi/2)$, $t >0$ and $B \in \mathcal{F}_t^{\theta}$ increasing, and for any $A \in \mathcal{F}_{<0}$ with $\mathcal{P}_{\lambda}^{\text{slab}}(A)>0$, that
\begin{align}\label{eq thm 1.7 hippo}
 \left| \mathcal{P}_{\lambda}^{\text{slab}}(B\mid A) - \mathcal{P}_{\lambda}^{\text{slab}}(B) \right| 
  &\leq \widehat{\mathcal{P}}_{\mu,\delta_{\bar{1}}} \left( \eta^1 \neq \eta^2 \text{ on } C_t^{\theta} \right),
\end{align} 
where $ \widehat{\mathcal{P}}_{\mu,\delta_{\bar{1}}}$ is the standard graphical construction coupling of the contact processes on $\Latd$ started at time $0$ from a configuration drawn according to $\mu$ and $\delta_{\bar{1}}$, respectively. 

Furthermore, we have that
\begin{align}\label{eq thm 1.7 last one}
\begin{split}
 \widehat{\mathcal{P}}_{\mu,\delta_{\bar{1}}} \left( \eta^1 \neq \eta^2 \text{ on } C_t^{\theta} \right)
  &\leq \sum_{ (x,s) \in C_t^{\theta}} \widehat{\mathcal{P}}_{\mu,\delta_{\bar{1}}} \left( \eta^1_s(x) \neq \eta^2_s(x) \right) 
 \\ &= \sum_{ (x,s) \in C_t^{\theta}} \widehat{\mathcal{P}}_{\mu,\delta_{\bar{1}}} \left( \eta^1_s(o) \neq \eta^2_s(o) \right),
 \end{split}
\end{align}
where the last equation holds due to translation invariance in the first $(d-1)$ spatial directions. 

Since the set of increasing events in  $\mathcal{F}_t^{\theta}$ generates $ \mathcal{F}_t^{\theta}$,  in  order to conclude the argument, it is sufficient to show that, for some $C,c\in(0,\infty)$, we have 
 \begin{align}\label{eq add name}\widehat{\mathcal{P}}_{\mu,\delta_{\bar{1}}} \left( \eta^1_s(o) \neq \eta^2_s(o) \right) \leq Ce^{-cs}.\end{align}
This can be shown using known estimates for the supercritical contact process on $\Latd$. For completeness we present the details.

Let $N:= \inf \{ \norm{x} \colon \eta_0^1(x) = 1 \text{ and } \tau^x=\infty \}$. Then, for any $a>0$, we have that
\begin{align}\label{eq thm 1.7 very last}
\begin{split}
\widehat{\mathcal{P}}_{\mu,\delta_{\bar{1}}} \left( \eta^1_s(o) \neq \eta^2_s(o) \right)  &\leq 
\widehat{\mathcal{P}}_{\mu_{\rho},\delta_{\bar{1}}} \left( \{ N > as \} \right) \\&+ \widehat{\mathcal{P}}_{\mu_{\rho},\delta_{\bar{1}}} \left( \{\eta^1_s(o) \neq \eta^2_s(o) \} \cap \{N\leq as\}\right).
\end{split}
\end{align}
That the first term on the righthand side decays exponentially (in $as$) follows from \cite[Theorem 1.2.30]{LiggettSIS1999}. 
For the other term, we have that
\begin{align}
&\widehat{\mathcal{P}}_{\mu_{\rho},\delta_{\bar{1}}} \left( \{\eta^1_s(o) \neq \eta^2_s(o) \} \cap \{ N\leq as\} \right)
\\ \leq &\sum_{y\in [-as,as]^d} \widehat{\mathcal{P}}_{\delta_{{\bar{0}_y}},\delta_{\bar{1}}} \left( \{\eta^1_s(o) \neq \eta^2_s(o) \} \cap \{\tau^y=\infty \} \right)
\\ \label{eq thm 1.7 latter term} \leq &\sum_{y\in [-as,as]^d} \widehat{\mathcal{P}}_{\delta_{{\bar{0}_y}},\delta_{\bar{1}}} \left( \eta^1_s(o) \neq \eta^2_s(o) \mid \tau^y=\infty  \right),
\end{align}
where $\bar{0}_{y}$ is the configuration given by $\bar{0}_y(x)=\bar{0}(x)=0$ for all $y\neq x$ and $\bar{0}_{y}(y) = 1-\bar{0}(y)=1$. 
From the large deviation estimates obtained in \cite[Theorem 1.4]{GaretMarchandLDPCPRE2013}, by choosing $a>0$  in \eqref{eq thm 1.7 latter term} sufficiently small,   the term inside the sum of  \eqref{eq thm 1.7 latter term} decays exponentially (in $s$), uniformly for $y \in [-as,as]^{d}$. Hence, since the sum only contains polynomially many terms,  we have that  \eqref{eq thm 1.7 very last}  decays exponentially with respect to $s$. 

In conclusion, there exist $C,c>0$ such that \eqref{eq add name} holds, 
from which, by \eqref{eq thm 1.7 hippo} and \eqref{eq thm 1.7 last one}, we conclude the proof.

 \end{proof}


\section{Open questions}\label{sec questions}

We expect that the statement of Theorem \ref{thm spin-flip tree} can be improved. 

\begin{q}
Can the condition $\lambda>\lambda_c(\integers)$ in Theorem \ref{thm spin-flip tree} be replaced by $\lambda>\lambda_c(T_d)$?
\end{q}
\begin{q}
Does Theorem \ref{thm spin-flip tree} hold with $\Delta = T_d$?
\end{q}


Motivated by Theorem \ref{thm spin-flip 2}  and Lemma \ref{lem spin-flip 1}, the following questions seem natural.

\begin{q}\label{q path}
 Consider the upper stationary contact process on $\Latd$,  $d\geq1$, with $\lambda>\lambda_c$, and let $\textbf{x} = (x_i)_{i \in \integers}$ be an infinite sequence of elements in $\Latd$. Does the contact process projected onto $\{ (x_i,i) \colon i \in \integers \}$ stochastically dominate a non-trivial Bernoulli product measure?
\end{q}

\begin{q}\label{q walk}
Consider the upper stationary contact process $(\eta_t)$ on $\Latd$, $d\geq1$, with $\lambda>\lambda_c$, and let $\textbf{X}=(X_i)_{i \geq 0}$ be a simple random walk on $\Latd$ started at $X_0=o$.
 Does the sequence $(\eta_i(X_i))_{i \geq 0}$ dominate a non-trivial Bernoulli sequence?
\end{q}
\begin{rem}
A positive answer to Question \ref{q path} with a uniform bound on the  density $\rho>0$  would imply a positive answer to Question \ref{q walk}.
\end{rem}


Lastly, motivated by Proposition \ref{prop amenable} and Theorem \ref{thm spin-flip 2}, we state the following question.

\begin{q}\label{q domination}
Consider the upper stationary contact process $(\eta_t)$ on $\Latd$, $d\geq1$, with $\lambda>\lambda_c$. For which $\Delta \subset \Latd$ having ``zero density'' (that is, the l.h.s.\ of \eqref{eq density of set} equals $0$)   
does $(\eta_t)$ projected onto $\Delta\times [0,\infty)$ dominate a non-trivial independent spin-flip process?
\end{q}



\subsection*{Acknowledgment}
The authors thank Markus Heydenreich and Matthias Birkner for discussions and comments. S.A. Bethuelsen thanks LMU Munich for hospitality during the writing of the paper.
S.A. Bethuelsen was supported by the Netherlands Organization for Scientific Research (NWO).



\begin{thebibliography}{18}
\providecommand{\natexlab}[1]{#1}
\providecommand{\url}[1]{\texttt{#1}}
\expandafter\ifx\csname urlstyle\endcsname\relax
  \providecommand{\doi}[1]{doi: #1}\else
  \providecommand{\doi}{doi: \begingroup \urlstyle{rm}\Url}\fi

\bibitem[Avena et~al.(2011)Avena, Hollander, and
  Redig]{AvenaHollanderRedigRWDRELLN2011}
L.~Avena, F.~den Hollander, and F.~Redig.
\newblock Law of large numbers for a class of random walks in dynamic random
  environments.
\newblock \emph{Electron. J. Probab.}, 16\penalty0 (21), 587--617, 2011.


\bibitem[van~den Berg et~al.(2006)van~den Berg, H{\"a}ggstr{\"o}m, and
  Kahn]{BergHaggstromKahn2006}
J.~van~den Berg, O.~H{\"a}ggstr{\"o}m, and J.~Kahn.
\newblock Some conditional correlation inequalities for percolation and related
  processes.
\newblock \emph{Random Structures Algorithms}, 29\penalty0 (4), 
  417--435, 2006.

\bibitem[{Bethuelsen} and {V{\"o}llering}(2016)]{BethuelsenVolleringRWDRE2016}
S.~A. {Bethuelsen} and F.~{V{\"o}llering}.
\newblock {Absolute Continuity and Weak Uniform Mixing of Random Walk in
  Dynamic Random Environment}.
  \newblock \emph{Electron. J. Probab.}, 21\penalty0 (71), 32 pp., 2016.
  

\bibitem[Bezuidenhout and Grimmett(1990)]{BezuidenhoutGrimmettCP1990}
C.~Bezuidenhout and G.~Grimmett.
\newblock The critical contact process dies out.
\newblock \emph{Ann. Probab.}, 18\penalty0 (4): 1462--1482, 1990.

\bibitem[Birkner et~al.(2016)Birkner, {\v C}ern{\'y}, and
  Depperschmidt]{BirknerCernyDepperschmidtRWDRE2015}
M.~Birkner, J.~{\v C}ern{\'y}, and A.~Depperschmidt.
\newblock Random walks in dynamic random environments and ancestry under local
  population regulation.
\newblock \emph{Electron. J. Probab.}, 21(38), 1--43, 2016.

\bibitem[Comets and Zeitouni(2004)]{CometsZeitouniLLNforRWME2004}
F.~Comets and O.~Zeitouni.
\newblock A law of large numbers for random walks in random mixing
  environments.
\newblock \emph{Ann. Probab.}, 32\penalty0 (1B), 880--914, 2004.


\bibitem[Durrett and Griffeath(1983)]{DurrettGriffeathCPhighDim1982}
R.~Durrett and D.~Griffeath.
\newblock Contact processes in several dimensions.
\newblock \emph{Z. Wahrsch. Verw. Gebiete}, 4\penalty0 (59),  535--552, 1982.

\bibitem[Durrett and Griffeath(1983)]{DurretGriffeathCP1983}
R.~Durrett and D.~Griffeath.
\newblock Supercritical contact processes on {$\mathbb{Z}$}.
\newblock \emph{Ann. Probab.}, 11\penalty0 (1),  1--15, 1983.

\bibitem[Garet and Marchand(2014)]{GaretMarchandLDPCPRE2013}
O.~Garet and R.~Marchand.
\newblock Large deviations for the contact process in random environment.
\newblock \emph{Ann. Probab.}, 42\penalty0 (4), 1438--1479,  2014.


\bibitem[Harris(1974)]{HarrisCP1974}
T.~E. Harris.
\newblock Contact interactions on a lattice.
\newblock \emph{Ann. Probab.}, 2, 969--988, 1974.

\bibitem[Liggett(1985)]{LiggettIPS1985}
T.~M. Liggett.
\newblock \emph{Interacting particle systems}, 
\newblock Springer, 
1985.

\bibitem[Liggett(1994)]{LiggettIPS1994}
T.~M. Liggett.
\newblock Survival and coexistence in interacting particle systems.
\newblock In \emph{Probability and phase transition}, Kluwer Academics, 1994. 

\bibitem[Liggett(1999)]{LiggettSIS1999}
T.~M. Liggett.
\newblock \emph{Stochastic interacting systems: contact, voter and exclusion
  processes}, 
\newblock Springer, 
1999.

\bibitem[Liggett(2006)]{LiggettCASS2006}
T.~M. Liggett.
\newblock Conditional association and spin systems.
\newblock \emph{ALEA Lat. Am. J. Probab. Math. Stat.}, 1, 1--19, 2006.

\bibitem[Liggett and Steif(2006)]{LiggettSteifSD2006}
T.~M. Liggett and J.~E. Steif.
\newblock Stochastic domination: the contact process, {I}sing models and {FKG}
  measures.
\newblock \emph{Ann. Inst. H. Poincar\'e Probab. Statist.}, 42\penalty0
  (2), 223--243, 2006.

\bibitem[Lyons and Peres(2016)]{LyonsPeresTrees2016}
R.~Lyons and Y.~Peres,
\newblock Probability on Trees and Networks.
\newblock Cambridge University Press, 2016.
\newblock Available at \url{http://pages.iu.edu/~rdlyons/}



\bibitem[Rebeschini and van Handel(2015)]{RebeschiniHandelPTNF2015}
P.~Rebeschini and R.~van Handel.
\newblock Phase transitions in nonlinear filtering.
\newblock \emph{Electron. J. Probab.}, 20\penalty0 (7), 1--46, 2015.





\end{thebibliography}

\end{document}